\newtheorem{Lemma}{Lemma}
\newtheorem{Prop}{Proposition}
\newtheorem{Remark}{Remark}
\newcommand{\beq}{\begin{equation}}
\newcommand{\eeq}{\end{equation}}
\newcommand{\bez}{\nopagebreak\hspace*{\fill}%
		 \nolinebreak$\qed$\vspace{5mm}\par}
\newcommand{\C}{\mathbb C}
\newcommand{\N}{\mathbb N}
\newcommand{\R}{\mathbb R}
\newcommand{\T}{\mathbb T}
\let\cal=\mathcal
\newcommand{\ca}{{\cal A}}
\newcommand{\cb}{{\cal B}}
\newcommand{\cc}{{\cal C}}
\newcommand{\cd}{{\cal D}}
\newcommand{\cf}{{\cal F}}
\newcommand{\cp}{{\cal P}}
\newcommand{\ct}{{\cal T}}
\newcommand{\vep}{\varepsilon}
\newcommand{\la}{\lambda}
\newcommand{\si}{\sigma}
\newcommand{\ot}{\otimes}
\let\hat=\widehat
\newcommand{\xbm}{(X,{\cal B},\mu)}
\newcommand{\zdr}{(Z,{\cal D},\rho)}
\newcommand{\ycn}{(Y,{\cal C},\nu)}
\begin{document}
\subjclass[2000]{Primary 37A05; secondary 37A10, 37A30, 37A50}

\title{Joining primeness and
disjointness from infinitely divisible systems}

\author{Mariusz Lema\'nczyk\and Fran\c{c}ois Parreau\and Emmanuel Roy}
\thanks{The research of the first author is partially supported by
Polish MNiSzW grant N N201 384834, Marie Curie ``Transfer of
Knowledge" EU program -- project MTKD-CT-2005-030042 (TODEQ), the
research of the first and third authors are partially supported by
MSRI (Berkeley), program ``Ergodic Theory and Additive Combinatorics"}

\address[Mariusz Lema\'nczyk]{ Faculty of Mathematics and Computer
Science, Nicolaus Copernicus University, ul.\ Chopina 12/18, 87--100
Toru\'n, Poland}
\email{mlem@mat.uni.torun.pl}

\address[Fran\c{c}ois Parreau \& Emmanuel Roy]{Laboratoire d'Analyse,
G\'eom\'etrie et Applications -- UMR 7539 Universit\'e Paris 13 et
CNRS, 99 av.  J.-B.\ Cl\'ement, 93430 Villetaneuse, France}
\email{parreau@math.univ-paris13.fr\\ roy@math.univ-paris13.fr}

\date{}

\begin{abstract} We show that ergodic dynamical systems generated by
infinitely divisible stationary processes are disjoint in the
sense of Furstenberg with distally simple systems and systems
whose  maximal spectral type is singular with respect to the
convolution of any two continuous measures.\end{abstract}

\keywords{Joinings, disjointness, infinite divisibility, distal simplicity, spectral singularity and convolutions}

\maketitle

\section*{Introduction}
The notion of disjointness of dynamical systems introduced by
Furstenberg \cite{Fu} in 1967 leads to a natural program of
identifying disjoint classes of dynamical systems.  We refer to the
recent monograph \cite{Gl} and also to \cite{Th} for the joining
theory of dynamical systems.  In this note we introduce the notion of
\emph{joining prime} (JP) system.  Such a system $T$ has the property
that every ergodic joining $\la$ of $T$ with the Cartesian product
$S_1\times\ldots \times S_k$ of weakly mixing systems is the direct
product of a joining of $T$ and some $S_i$ with the remaining factors
$S_j$.  In particular, a weakly mixing JP system does not admit a
Cartesian product representation, and it is natural to expect that it
will be disjoint from classes of systems which are sufficiently
divisible in some sense.  Indeed, we will show that the JP class is
disjoint from a class of systems which includes systems of
probabilistic origin, namely those arising from infinitely divisible
(ID) stationary processes, and also from infinitely divisible systems
in the ergodic theoretical sense (to avoid confusion, we will refer to
the latter as \emph{ergodically ID systems}).

The JP class includes two recently considered classes of
automorphisms: (A)~some generalizations of simple systems
 (\emph{distally simple systems}), considered
by Ryzhikov and Thouvenot in \cite{Ry-Th}, and later in \cite{Ju-Le1};
(B)~systems whose maximal spectral type is singular with respect to
the convolution of any two continuous measures on the circle.

Simple systems were introduced by Veech \cite{Ve} and del Junco and
Rudolph \cite{Ju-Ru}.  We recall here that this class includes some
horocycle flows \cite{Ra}, and in fact all horocycle flows are factors
of simple systems \cite{Th}.  Another subclass of simple systems comes
from the finite rank theory (see \cite{Ju-Ra-Sw}, \cite{Ju-Ru1}).  In
\cite{Ju-Le}, simple systems were shown to be disjoint from Gaussian
systems; earlier, in \cite{Th}, Thouvenot showed disjointness of
Gaussian systems from systems with the minimal self-joining
property.
This disjointness result was generalized in two directions.  In
\cite{Th},
Thouvenot introduced ergodically ID systems, extending some properties
of Gaussian systems: an ergodically ID system possesses an infinite
tree of splitting factors, in which for every path the intersection of
the decreasing family of factors is trivial.

Besides a larger class of systems was recently considered in
\cite{Ry-Th},
namely those systems for which all ergodic self-joining different
from the
product measure are relatively distal over the marginal factors.
Disjointness of such systems from some subclasses of ergodically ID
systems including Gaussian systems was then proved in \cite{Ry-Th} and
\cite{Th1}.  When we assume additionally the
so called PID property (each pairwise independent self-joining of such
a system is independent), then we get the class of so-called
\emph{distally simple} systems which are shown to be
disjoint with all ergodically ID systems in \cite{Ju-Le1}.  All flows
with the Ratner
property (\cite{Ra}) are distally simple.  Also, partially mixing
finite rank transformations lie in the class of distally simple
systems \cite{Ki-Th}, \cite{Th}.  Moreover, recently in
\cite{Fr-Le}, it is proved that some smooth flows on surfaces satisfy
the Ratner property and hence are distally simple.

On the other hand, the JP class includes those systems whose maximal
spectral type is singular with respect to the convolution of any two
continuous measures, property which we refer to as the \emph{CS
property}.  A stronger property is that the Gaussian automorphism
constructed from the (reduced) maximal spectral type has a simple
spectrum (\emph{simple convolutions property}).  The famous Chacon
automorphism has both the minimal self-joinings property
(\cite{Ju-Ra-Sw}) and the simple convolution property (\cite{Ag1}).
Ageev \cite{Ag2} and Ryzhikov \cite{Ry2} recently showed that many
other rank one transformations enjoy the simple convolution property,
including typical automorphisms and mixing examples.  These properties
are also related to the property that the convolution powers of the
(reduced) maximal spectral type are mutually singular (see \cite{Ka},
and also \cite{Ju-Le0} and \cite{Th} for usefulness of such a property
in ergodic theory).

The extension of the definitions and results to flows and actions of
other locally compact abelian groups is straightforward.
In Section~\ref{flows}, we will show that when $\ct=(T_t)_{t\in\R}$
is a measurable flow and
in the weak closure of the unitary group coming from time~$t$
automorphisms we find an operator of the form $f(\ct)$, where
$f:\R\to\C$ is analytic and different from a multiple of a character
then the flow has the CS property, and hence it has also the JP
property.  The assumption turns out to be satisfied for many natural
examples of flows (see Section~\ref{flows}).  In
\cite{Le-Pa}, even the simple convolutions property is shown to hold
for a large class of smooth flows on the torus.

We show that systems with the JP property are disjoint with
ergodically ID systems, which extends the result of \cite{Ju-Le1} for
distally simple systems.  In particular a consequence is that a
typical dynamical system is disjoint from every ergodically ID system.
In fact we give a general criterion (Proposition~\ref{l3}) for such a
disjointness which is apparently satisfied for a larger class than
ergodically ID systems.  The criterion applies to systems which can be
embedded in symmetric factors of infinitely many Cartesian products,
and hence is especially adapted to systems given by stationary ID
processes (and in particular to Poisson suspension systems).  However,
we also give simple arguments showing that systems generated by
stationary ID processes, like Poisson suspensions, are factors of
ergodically ID systems.

Most results of this paper have been obtained during the visit the
first
author at the University Paris 13 in Fall 2006.

\section{Basic definitions and notation}\label{Def}
We will firstly consider automorphisms acting on standard probability
Borel spaces
and their ergodicity will be tacitly
assumed.  Joinings between automorphisms $T$ and $S$ are meant as
measures on the corresponding product space having the coordinate
projections as the original measures and invariant under $T\times S$.
The set of joinings between these automorphisms is denoted by $J(T,S)$
(with an obvious change of notation when more automorphisms are
involved).  The subset of ergodic joinings will be denoted by
$J^e(T,S)$.  Following \cite{Fu}, systems $T$ and $S$ between which
the only possible joining is the product measure are called {\em
disjoint}; we write then $T\perp S$.  If $T$ and $S$ act on $\xbm$ and
$\ycn$ respectively and if $\lambda\in J(T,S)$ then it determines a
Markov operator $\Phi_{\lambda}:L^2\xbm\to L^2\ycn$,
\beq\label{markov1}
    \int_Y\Phi_\lambda(f)g\,d\nu =
    \int_{X\times Y}f\otimes g\,d\lambda,\quad\Phi_\lambda\circ
T=S\circ\Phi_\lambda
\eeq
(we still denote by $T$ and $S$ the corresponding unitary operators
on the $L^2$ spaces).
That is $\Phi_\lambda$ is doubly stochastic:
$\Phi_\lambda(1)=\Phi_\lambda^\ast(1)=1$, and the image via
$\Phi_{\lambda}$ of a non-negative function is non-negative.  Notice
that $\Phi_\lambda(f)=E^\lambda(f \otimes 1|Y)$, where by
$E^{\lambda}(\cdot|Y)$ we denote the conditional expectation with
respect to the $\sigma$-algebra $\{\emptyset,X\}\otimes\cc$.  In
fact~(\ref{markov1}) establishes a 1-1 correspondence between joinings
and Markov operators, see e.g. \cite{Gl}, Chapter~6 for more details.

Let $R$ be an ergodic automorphism of a standard probability Borel
space $\zdr$.  Following \cite{Ju-Ru}, $R$ is said to have the {\em
pairwise independence property} (PID) if every (finite or infinite)
pairwise independent self-joining of $R$ is equal to the product
measure.  Ryzhikov in \cite{Ry} proved the following important lemma.

\begin{Lemma}\label{lryz}
Assume that $R$ has the PID property and let $T$, $S$ be
automorphisms.
Assume that $\lambda\in J(T,S,R)$ is pairwise independent. Then it is
the
product measure.
\end{Lemma}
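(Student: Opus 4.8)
The plan is to reduce the statement to a single assertion about conditional expectations and then feed a carefully built self-joining of $R$ into the PID hypothesis. Write $\lambda$ on $X\times Y\times Z$ and disintegrate it over the factor $X\times Y$; since $\lambda$ is pairwise independent, its $X\times Y$-marginal is $\mu\times\nu$, so we obtain a measurable family $(\kappa_{x,y})$ of probability measures on $Z$ with $\lambda=\int_{X\times Y}\delta_{(x,y)}\ot\kappa_{x,y}\,d(\mu\times\nu)$. Invariance of $\lambda$ under $T\times S\times R$ and uniqueness of the disintegration give the equivariance $R_*\kappa_{x,y}=\kappa_{Tx,Sy}$ for a.e.\ $(x,y)$. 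For $h\in L^2(Z)$ put $c_h(x,y)=\int_Z h\,d\kappa_{x,y}=E^\lambda(h|X\times Y)$. My goal is to show $c_h=0$ whenever $\int_Z h\,d\rho=0$: this says exactly that the $Z$-factor is independent of $X\times Y$, and combined with $X\perp Y$ it yields $\lambda=\mu\times\nu\times\rho$. Note already that for such mean-zero $h$ pairwise independence forces $\int_X c_h\,d\mu=E^\lambda(h|Y)=0$ for a.e.\ $y$ and $\int_Y c_h\,d\nu=E^\lambda(h|X)=0$ for a.e.\ $x$.

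The key step is to manufacture a pairwise independent self-joining of $R$ out of $\lambda$. I draw independent $x_1,x_2,\dots$ with law $\mu$ and independent $y_1,y_2,\dots$ with law $\nu$, and then, conditionally independently given all the $x_i,y_j$, a variable $z_{ij}$ with law $\kappa_{x_i,y_j}$ for each pair $(i,j)$. Let $\Sigma$ be the joint law of the countable family $(z_{ij})$. The equivariance $R_*\kappa_{x,y}=\kappa_{Tx,Sy}$ shows that $\Sigma$ is invariant under the diagonal action of $R$ and has all marginals equal to $\rho$, so $\Sigma$ is a self-joining of $R$. I claim it is pairwise independent: two variables $z_{ij}$ and $z_{kl}$ with $(i,j)\neq(k,l)$ share at most one index. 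If they share no index they are independent by construction; if they share only the first index (say $i=k$, $j\neq l$) their correlation involves the partial integral $\int_Y c_h\,d\nu=E^\lambda(h|X)=0$ and hence vanishes; the case of a common second index is symmetric, using $\int_X c_h\,d\mu=E^\lambda(h|Y)=0$. Thus every pair is independent, and the PID property of $R$ forces $\Sigma$ to be the product measure $\rho^{\ot\infty}$.

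It remains to read off that $c_h=0$. Applying joint independence of the block $z_{11},z_{12},z_{21},z_{22}$ to the functions $h,\ov h,\ov h,h$ and disintegrating again over the $x_i,y_j$ gives
\beq
\int_{X^2\times Y^2} c_h(x_1,y_1)\,\ov{c_h(x_1,y_2)}\,\ov{c_h(x_2,y_1)}\,c_h(x_2,y_2)\,d\mu(x_1)\,d\mu(x_2)\,d\nu(y_1)\,d\nu(y_2)=0.
\eeq
Expanding $c_h$ in its Schmidt decomposition $c_h=\sum_k\sigma_k\,a_k\ot b_k$ with $(a_k)$ and $(b_k)$ orthonormal, the left-hand side collapses to $\sum_k\sigma_k^4$. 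Hence all Schmidt coefficients vanish, i.e.\ $c_h=0$; together with $X\perp Y$ this gives $\lambda=\mu\times\nu\times\rho$.

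The main obstacle is the construction in the second paragraph: one has to recognise that gluing two copies of the $Z$-system through a single shared coordinate automatically decouples them, because the resulting partial integral of $c_h$ is precisely one of the pairwise conditional expectations killed by the hypothesis. This is what makes the doubly-indexed array pairwise independent while keeping it genuinely correlated, so that one application of PID together with the elementary $2\times2$ Schmidt computation forces $c_h=0$. The remaining points — existence and equivariance of the disintegration and measurability of $(\kappa_{x,y})$ — are routine on standard Borel spaces.
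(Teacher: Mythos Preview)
Your proof is correct. Note, however, that the paper does not give its own proof of Lemma~\ref{lryz}: the result is quoted from Ryzhikov~\cite{Ry} and used as a black box, so there is nothing in the paper to compare against line by line. Your construction---independent samples $(x_i)$ and $(y_j)$, conditionally independent $z_{ij}\sim\kappa_{x_i,y_j}$, yielding a pairwise independent infinite self-joining of $R$ to which PID applies, followed by reading off $c_h=0$ from the $2\times2$ sub-array via the Schmidt expansion---is very much in the spirit of Ryzhikov's original argument.

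Two minor comments. In the pairwise-independence check you should say explicitly that you verify $E[f(z_{ij})g(z_{kl})]=E[f]\,E[g]$ for \emph{all} bounded measurable $f,g$, not only that ``the correlation vanishes'' for the fixed $h$; this follows exactly as you indicate once one splits off the means and uses $\int_Y c_{f_0}\,d\nu=E^\lambda(f_0\mid X)=0$ and $\int_X c_{f_0}\,d\mu=E^\lambda(f_0\mid Y)=0$ for mean-zero $f_0$. And the Schmidt step can be replaced by the observation that your four-fold integral equals
\[
\int_{Y\times Y}\Bigl|\int_X c_h(x,y_1)\,\ov{c_h(x,y_2)}\,d\mu(x)\Bigr|^2\,d\nu(y_1)\,d\nu(y_2)=\|A^\ast A\|_{HS}^2,
\]
where $A:L^2(Y)\to L^2(X)$ is the Hilbert--Schmidt operator with kernel $c_h$; its vanishing gives $A=0$ and hence $c_h=0$ directly.
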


For the definition of distality and relative distality we refer the
reader to \cite{Fu1}.  An automorphism $R$ is called {\em 2-fold
distally simple} (see \cite{Ju-Le1}, \cite{Ry-Th}) if each $\la\in
J^e(R,R)$ is either the product measure, or the extension $(R\times
R,\la)$ is relatively distal over the marginal factors $R$.  A 2-fold
distally system which has the PID property is called {\em distally
simple}.

As proved in \cite{Ju-Le1}, each 2-fold distally simple
automorphism  either has purely discrete spectrum or is weakly
mixing. Another important property of a 2-fold distally simple
system $R$ is that (see \cite{Ju-Le1}, and also
\cite{Ry-Th}):
\beq\label{jule}
\parbox[c]{11.5cm}{\em if $S$ is an arbitrary automorphism and
$\la\in J^e(R,S)$
	then either $\la$ is the product measure or the extension
	$(R\times S,\la)$ is relatively distal over the factor $S$.}
\eeq

We also recall (see \cite{Fu}) that

\beq\label{furst}
\parbox[c]{11.5cm}{\em if $T$ is ergodic, the extension $T\to T_1$ is
relatively distal and 
$S$ is weakly mixing, then the only extension of $T_1\times S$ to an
ergodic joining of $T$
and $S$ is the direct product.}
\eeq

A dynamical system $T$ will be said to have the {\em convolution
singularity}
(CS) property  if its reduced maximal
spectral type $\sigma_T$ (i.e. the maximal spectral type of $T$ on
$L^2_{0}\xbm$,
the orthocomplement of the constant functions) is singular with
respect to the convolution
product $\la_1\ast\la_2$ of two arbitrary continuous
measures $\la_1,\la_2$ on the circle. CS systems have purely
singular spectra, thus they automatically have the PID property by
the result of Host
\cite{Ho}. It can be  seen (\cite{Pa-Ro}) that the CS
property holds if the symmetric tensor product operator
$V_\sigma\odot V_\sigma$ has simple spectrum, where
$$V_\sigma:L^2(\T,\sigma)\to
L^2(\T,\sigma),\;\;V_\sigma(f)(z)=zf(z).$$
This relates CS property of $T$ with spectral properties of
the Gaussian automorphism determined by $\sigma_T$. In particular
if this Gaussian automorphism has a simple spectrum then $T$ has
CS property (see \cite{Co-Fo-Si}).

In what follows, if no confusion arises, we will identify
automorphisms with their actions on $\sigma$-algebras, in particular
factor automorphisms will be identified with the underlying invariant
sub-$\sigma$-algebras.

\section{Systems  with joining primeness property and
disjointness}\label{disjJP}

An ergodic system $R$ acting on $\zdr$ is said to have the
\emph{joining primeness} (JP)
property if for every pair of weakly mixing systems $S_{1}$ and
$S_{2}$
and every $\lambda\in J^{e}\left(R,S_{1}\times S_{2}\right)$ we have,

\[
\lambda=\lambda_{Z,Y_{1}}\otimes\nu_{2}\quad\textrm{or}\quad\lambda=\lambda_{Z,Y_{2}}\otimes\nu_{1},
\]
where by $\lambda_{Z,Y_{i}}$ we denote the projection of $\lambda$
to the corresponding two coordinates.

Of course this definition really makes sense only for a system which
is not disjoint from a
weakly mixing system.

Notice that assuming $S_{1}$ and $S_{2}$ isomorphic would give an
equivalent definition. Indeed, let $\lambda\in J^{e}\left(R,S_1\times
S_2\right)$
where $S_1$ and $S_2$ are two weakly mixing systems, not necessarily
isomorphic. If $S^{\prime}_1$ and $S^{\prime}_2$ are isomorphic
copies of $S_1$ and $S_2$ respectively, then, by forming the direct
product of the previous joining
with the direct product $S^{\prime}_1\times S^{\prime}_2$, we obtain
an ergodic joining of $R$ with the direct product
$\left(S_1\times S^{\prime}_2\right)\times\left(S^{\prime}_1\times
S_2\right)$
of two isomorphic weakly mixing transformations and the equivalence
of the definition follows. Remark also that we could have required
$R$ to satisfy a seemingly stronger property: for any $n\geq2$,
any familly $S_{1},\dots,S_{n}$ of weakly mixing transformations and
any
$\lambda\in J^e(R,S_1\times\ldots\times S_n)$,
there exists $j(\lambda)\in\{1,\ldots, n\}$ such that
\beq\label{jlambda}
\lambda=\lambda_{Z,Y_{j(\lambda})}\otimes\left(\otimes_{j\neq
j(\lambda)}\nu_{j}\right).
\eeq
Once again, it is not difficult to check that the resulting definition
would be equivalent to the previous one and that it is still so if
$S_{1},\dots,S_{n}$
are assumed isomorphic.

Given $\la\in J^e(R,S_1\times\ldots\times S_n)$, we identify $\cd$
and the $\cc_i$
to sub-$\sigma$-algebras of the joining system. It is easy to see
that $\lambda$
satisfies (\ref{jlambda}) if and only if
\beq\label{f4}
\Phi_{\lambda}(f)\in L^2(\cc_{j(\lambda)})
\eeq for each $f\in L^2_0\zdr$.

\begin{Prop}
The class of weakly mixing JP systems is closed under factors,
inverse limits and distal extensions which are weakly mixing.
\end{Prop}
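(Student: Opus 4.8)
The plan is to verify the JP property everywhere through the Markov--operator reformulation~(\ref{f4}): a weakly mixing system $R$ on $\zdr$ is JP exactly when, for every pair of weakly mixing $S_1,S_2$ and every $\la\in J^e(R,S_1\times S_2)$, there is an index $j\in\{1,2\}$ with $\Phi_\la(L^2_0\zdr)\subseteq L^2(\cc_j)$. The guiding principle in all three cases is that $\Phi_\la$ and the operator attached to a factor agree on the common $L^2$-space, so the membership statement can be transported between $R$ and the auxiliary system; note also that a factor of a weakly mixing system, an inverse limit of weakly mixing systems, and a weakly mixing distal extension are all weakly mixing, so the ambient hypothesis is preserved.

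For factors, let $R$ be weakly mixing JP with factor $R_1$ on $(Z_1,\cd_1,\rho_1)$, and take $\la_1\in J^e(R_1,S_1\times S_2)$. First I would lift $\la_1$ to some $\la\in J(R,S_1\times S_2)$ projecting onto $\la_1$ (e.g.\ the relatively independent extension over $R_1$) and then pass to an ergodic component $\la^e$; since $\la_1$ is ergodic, almost every ergodic component of $\la$ still projects onto $\la_1$, so one obtains $\la^e\in J^e(R,S_1\times S_2)$ lying over $\la_1$. Applying the JP property of $R$ to $\la^e$ splits off one factor, say $\la^e=\la^e_{Z,Y_1}\ot\nu_2$, i.e.\ $Y_2$ is $\la^e$-independent of $(Z,Y_1)$. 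Projecting back to $Z_1\times Y_1\times Y_2$ preserves this independence (as $\cd_1\subseteq\cd$) and gives $\la_1=(\la_1)_{Z_1,Y_1}\ot\nu_2$, as required.

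For inverse limits, let $R$ be the inverse limit of weakly mixing JP systems $R_n$, with factor $\si$-algebras $\cd_n\uparrow\cd$, and take $\la\in J^e(R,S_1\times S_2)$. For each $n$ the projection $\la^{(n)}$ to the $R_n$-coordinate is an ergodic joining, and for $f\in L^2(Z_n)$ a direct check from~(\ref{markov1}) shows $\Phi_\la(f)=\Phi_{\la^{(n)}}(f)$; hence the JP property of $R_n$, read through~(\ref{f4}), yields $j_n\in\{1,2\}$ with $\Phi_\la(L^2_0(Z_n))\subseteq L^2(\cc_{j_n})$. By the pigeonhole principle some value $j^\ast$ occurs for an infinite, hence cofinal, set of levels, and for every $f\in\bigcup_n L^2_0(Z_n)$ one may select such a level to conclude $\Phi_\la(f)\in L^2(\cc_{j^\ast})$. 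Since $\bigcup_n L^2_0(Z_n)$ is dense in $L^2_0\zdr$, the operator $\Phi_\la$ is a contraction, and $L^2(\cc_{j^\ast})$ is closed, it follows that $\Phi_\la(L^2_0\zdr)\subseteq L^2(\cc_{j^\ast})$, so $R$ is JP.

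For a weakly mixing distal extension $R$ of a weakly mixing JP system $R_1$, take $\la\in J^e(R,S_1\times S_2)$ and let $\la_1$ be its projection to the $R_1$-coordinate. The JP property of $R_1$ gives, say, $\la_1=(\la_1)_{Z_1,Y_1}\ot\nu_2$, so that $Y_2$ is already $\la$-independent of $Z_1\times Y_1$. I would then set $T=(Z\times Y_1,\la_{Z,Y_1})$ with its factor $T_1=(Z_1\times Y_1,\la_{Z_1,Y_1})$ and view $\la$ as an ergodic joining of $T$ with the weakly mixing $S_2$ whose projection to $T_1\times S_2$ is the product. Provided the extension $T\to T_1$ is relatively distal, property~(\ref{furst}) then forces $\la=\la_{Z,Y_1}\ot\nu_2$, the desired JP conclusion (the case where $R_1$ splits off $Y_1$ being symmetric). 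The main obstacle is exactly this relative distality hypothesis of~(\ref{furst}). Here the distality of $R\to R_1$ enters: since $T$ is precisely the joining of $R$ and $T_1$ over their common factor $Z_1$, relative distality of $T\to T_1$ should follow from the permanence of relatively distal extensions under joinings over the base, the relatively isometric steps of the Furstenberg--Zimmer tower of $R\to R_1$ lifting to relatively isometric steps of $T\to T_1$ (the fibre cocycles depend only on $Z_1\subseteq T_1$). Establishing this base-change stability carefully is the technical heart of the argument; the surrounding bookkeeping is routine.
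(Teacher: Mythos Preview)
Your argument is correct and follows the same route as the paper's proof, which is a two--line sketch: the paper simply declares the first two closure properties ``obvious'' and says the third ``follows from~(\ref{furst})''. Your write-up is a faithful expansion of exactly these hints.

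One remark on the point you flag as the ``main obstacle'' in the distal-extension case: the fact that relative distality of $R\to R_1$ passes to $T=(Z\times Y_1,\la_{Z,Y_1})\to T_1=(Z_1\times Y_1,\la_{Z_1,Y_1})$ is indeed the only thing to check, but it is standard and the paper takes it for granted. Since $\cd_1\subset\cd$, one has $T=\cd\vee\cc_1=\cd\vee(\cd_1\vee\cc_1)=\cd\vee T_1$; a tower of relatively isometric extensions $\cd_1\subset\cdots\subset\cd$ yields, upon taking the join with $T_1$, a tower $T_1\subset\cdots\subset T$ of relatively isometric extensions (each intermediate step $\ce\vee T_1\to\ce'\vee T_1$ is relatively isometric because the conditional measures over $\ce'\vee T_1$ refine those over $\ce'$, so the same fibrewise isometry group acts). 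This is the ``base change'' stability you describe, and it is routine in the Furstenberg--Zimmer framework; there is no hidden difficulty, and the paper does not spell it out either.

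For the inverse-limit step, your pigeonhole argument is fine; it may be worth noting explicitly that if \emph{both} indices occurred infinitely often one would get $\Phi_\la(L^2_0\zdr)\subset L^2(\cc_1)\cap L^2(\cc_2)=\C$, i.e.\ $\la$ is the product measure, which is a fortiori JP-split. Your formulation already covers this since an infinite set of levels is automatically cofinal for an increasing filtration.
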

\begin{proof}
The  first two properties are obvious. The fact that the JP
property is closed under weakly mixing distal extensions follows from
(\ref{furst}).
\end{proof}

Let us notice that whenever a JP automorphism $R$ has a common
factor with a direct (weakly mixing) product $S_1\times S_2$ then
in fact this common factor is contained in either the first or the
second coordinate $\sigma$-algebra of $\cc_1\ot\cc_2$. Hence using
del Junco-Rudolph's criterion from \cite{Ju-Ru} of
non-disjointness of a system from a simple system we obtain the
following.

\begin{Prop}
A JP system is disjoint with a weakly mixing simple system if and
only if they do not have a common non-trivial factor.\bez
\end{Prop}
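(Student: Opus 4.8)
The statement is a biconditional, and I would treat its two directions separately. The forward implication (disjointness implies no common nontrivial factor) is completely general and uses neither the JP property nor simplicity: if $R$ and $S$ shared a nontrivial common factor $W$, then the relatively independent joining of $R$ and $S$ over $W$ would differ from the product measure $\rho\otimes\nu$ (it concentrates on the diagonal of $W\times W$ and is therefore not the product), so $R$ and $S$ would fail to be disjoint. I would dispose of this direction first, in essentially one line, since it is just the standard fact that a shared factor always obstructs disjointness.

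The content lies in the reverse implication: assuming $R$ and $S$ have no common nontrivial factor, prove $R\perp S$. Arguing by contradiction, suppose there is some $\lambda\in J^e(R,S)$ different from the product measure. The plan is to feed this joining into del Junco--Rudolph's criterion for non-disjointness from a simple system \cite{Ju-Ru}: since $S$ is weakly mixing and simple, a nonproduct ergodic joining with $R$ must be built over a nontrivial factor common to $R$ and $S$, which contradicts the hypothesis and forces $\lambda$ to be the product measure. Here the observation recorded immediately above the proposition is what makes the extraction clean. To locate the common factor one passes to the self-joining structure of $S$, whose ergodic self-joinings are, by simplicity, off-diagonals (graphs of centralizer elements) and products over factors; forming the relatively independent extension over $R$ of two copies of $\lambda$ presents $R$ alongside a product $S_1\times S_2$ of two weakly mixing copies of $S$. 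The JP property of $R$ then guarantees that any common factor arising in this configuration is genuinely carried by a single coordinate $\cc_{j(\lambda)}$, so it descends to a bona fide common factor of $R$ and $S$ rather than a correlation smeared across the two copies.

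The main obstacle is the correct invocation of del Junco--Rudolph, namely verifying that a nonproduct ergodic joining with the simple system $S$ really does manufacture a shared factor rather than a weaker Markovian correspondence $\Phi_\lambda$, and checking that the JP-localization of common factors in products applies verbatim to the auxiliary self-joining $S_1\times S_2$ produced in the argument (in particular that one may pass to ergodic components, whose marginals remain $\rho$ and $\nu$ by ergodicity of $R$ and $S$). Once these structural points are settled, the contradiction with the no-common-factor hypothesis is immediate, so the real work is confined to pinning down the dictionary between nonproduct joinings of the simple system and its factors and confirming that the JP property routes every such factor into one coordinate.
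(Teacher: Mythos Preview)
Your approach is the paper's: combine del Junco--Rudolph's non-disjointness criterion for simple systems (if $R\not\perp S$ with $S$ simple, then $R$ shares a nontrivial factor with some Cartesian power $S^{\times k}$) with the observation recorded just before the proposition that the JP property forces any common factor with a weakly mixing product into a single coordinate, hence---iterating---into $S$ itself. The paper treats del Junco--Rudolph as a black box and writes nothing further.

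One caution about your attempted unpacking: the relatively independent extension over $R$ of two copies of $\lambda$ does \emph{not} in general present the two $S$-copies as an independent product (they are only conditionally independent given $R$), so the JP property cannot be applied directly to that construction. You flag this yourself as an obstacle, and indeed resolving it is exactly the content of the del Junco--Rudolph theory; the cleanest route, and the one the paper takes, is simply to invoke their criterion as stated and then iterate the JP localization down from $S^{\times k}$ to $S$.
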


The result below is proved in \cite{Ry-Th} and \cite{Ju-Le1} in a
slightly different form 
and we give a proof only for completeness.

\begin{Prop}\label{mar}
If $R$ is weakly mixing and distally simple  then it has the JP
property.
\end{Prop}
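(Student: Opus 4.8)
The plan is to fix an arbitrary pair of weakly mixing systems $S_1,S_2$ together with an ergodic joining $\lambda\in J^e(R,S_1\times S_2)$, and to study the two intermediate projections $\lambda_{Z,Y_1}$ and $\lambda_{Z,Y_2}$. For each $i$, the projection of $\lambda$ onto the $Z$- and $Y_i$-coordinates is a factor of the ergodic system $(R\times S_1\times S_2,\lambda)$, hence lies in $J^e(R,S_i)$. I would then apply the structural dichotomy (\ref{jule}) to each of them \emph{separately}: for $i=1,2$, either $\lambda_{Z,Y_i}$ is the product measure $\rho\otimes\nu_i$, or the extension $(R\times S_i,\lambda_{Z,Y_i})\to S_i$ is relatively distal over the factor $S_i$.

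Suppose first that one of the projections, say $\lambda_{Z,Y_1}$, yields a relatively distal extension over $S_1$. I would view the full joining $\lambda$ as an ergodic joining of the intermediate system $T_0:=(R\times S_1,\lambda_{Z,Y_1})$ with the weakly mixing system $S_2$. The key observation is that, since $\lambda$ is a joining of $R$ with the \emph{genuine} Cartesian product $S_1\times S_2$, its marginal on $Y_1\times Y_2$ is exactly $\nu_1\otimes\nu_2$; equivalently, the restriction of $\lambda$ to the factor $S_1\times S_2$ of $T_0\times S_2$ is the direct product. As $T_0$ is ergodic, $T_0\to S_1$ is relatively distal, and $S_2$ is weakly mixing, property (\ref{furst}) then forces $\lambda$ to be the direct product of $T_0$ and $S_2$, that is $\lambda=\lambda_{Z,Y_1}\otimes\nu_2$. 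By the symmetric argument, if instead the extension over $S_2$ is relatively distal, one obtains $\lambda=\lambda_{Z,Y_2}\otimes\nu_1$. In either case the required JP conclusion holds.

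It remains to treat the case in which both $\lambda_{Z,Y_1}$ and $\lambda_{Z,Y_2}$ are product measures. Then $R$ is independent of $S_1$ and of $S_2$ under $\lambda$, while $S_1$ and $S_2$ are independent of one another (again because the marginal of $\lambda$ on $Y_1\times Y_2$ is $\nu_1\otimes\nu_2$). Hence $\lambda\in J(S_1,S_2,R)$ is pairwise independent. Since a distally simple system has the PID property, Lemma~\ref{lryz} applies and shows that $\lambda$ is the product measure, which is \emph{a fortiori} of the form $\lambda_{Z,Y_1}\otimes\nu_2$. This exhausts all four combinations of the two dichotomies.

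The main obstacle I anticipate is not any hard estimate but rather setting up the reduction to (\ref{furst}) correctly: one must select the correct intermediate factor $(R\times S_i,\lambda_{Z,Y_i})$, verify that it is ergodic and that the extension down to $S_i$ is precisely the relatively distal one supplied by (\ref{jule}), and check that the marginal of $\lambda$ on the product factor $S_1\times S_2$ is genuinely $\nu_1\otimes\nu_2$, so that the hypothesis of (\ref{furst}) on the restriction is met. The only genuinely ``divisibility-sensitive'' step is confined to the doubly-product case, where the PID property, via Ryzhikov's Lemma~\ref{lryz}, is exactly what rules out a nontrivial pairwise independent joining.
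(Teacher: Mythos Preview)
Your proof is correct and follows essentially the same route as the paper: apply the dichotomy (\ref{jule}) to each projection $\lambda_{Z,Y_i}$, use (\ref{furst}) in the relatively distal case, and dispose of the doubly-independent case via PID and Lemma~\ref{lryz}. Your write-up simply makes explicit the ergodicity of the projections and the verification of the hypothesis of (\ref{furst}), which the paper leaves implicit.
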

\begin{proof}
Assume that $\lambda\in J^e(R,S_1\times S_2)$. By (\ref{jule}), the
extension
$(R\times S_1, \lambda_{Z,Y_1})\to S_1$ is either relatively distal
or we have
independence. If it is relatively distal, since $S_2$ is weak mixing,
we have
$\lambda=\lambda_{Z,Y_1}\otimes\nu_2$ by (\ref{furst}).
Similarly, if 
$(R\times S_2, \lambda_{Z,Y_2})\to S_1$ is relatively distal, we get
$\lambda=\lambda_{Z,Y_2}\otimes\nu_1$.
Furthermore, if $\cd$ is independent from $\cc_1$ and $\cc_2$, then
by the PID property of $R$
(and Lemma~\ref{lryz}), $\lambda$ is just the product measure.
\end{proof}

In order to prove that CS implies JP we need a
lemma.

\begin{Lemma}\label{mar1} Assume that
$\lambda \in J^e(R, S_1\times  S_2)$ and it satisfies
$$
\Phi_\lambda(L_0^2\zdr)\subset L_0^2(Y_1,\cc_1,\nu_1) \oplus
 L_0^2(Y_2,\cc_2,\nu_2).$$
Then $\lambda=\lambda_{Z,Y_1} \otimes\nu_2$ or 
$\lambda=\lambda_{Z,Y_2} \otimes\nu_1$.
 \end{Lemma}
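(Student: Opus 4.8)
The plan is to reduce the statement to showing that \emph{at least one of two Markov sub-operators vanishes}, and then to extract that vanishing from the ergodicity of $\la$ via a correlation computation. First I would reformulate the hypothesis. Since $L^2(Y_1\times Y_2)$ splits orthogonally as $\C\oplus L_0^2(Y_1)\oplus L_0^2(Y_2)\oplus\bigl(L_0^2(Y_1)\ot L_0^2(Y_2)\bigr)$, the assumption $\Phi_\la(L_0^2\zdr)\subset L_0^2(Y_1)\oplus L_0^2(Y_2)$ means that for every $f\in L_0^2\zdr$ we may write $\Phi_\la(f)=\Phi_1(f)+\Phi_2(f)$ with $\Phi_i(f)\in L_0^2(Y_i)$, and equivalently that
\beq\label{triple}
\int f\ot g_1\ot g_2\,d\la=0\quad\text{for all } f\in L_0^2\zdr,\ g_1\in L_0^2(Y_1,\cc_1,\nu_1),\ g_2\in L_0^2(Y_2,\cc_2,\nu_2).
\eeq
Note that $\langle\Phi_1 f,g_1\rangle=\int f\ot g_1\ot 1\,d\la$ and $\langle\Phi_2 f,g_2\rangle=\int f\ot 1\ot g_2\,d\la$, so $\Phi_i$ is just the $L_0^2$-part of the Markov operator of the marginal joining $\la_{Z,Y_i}$. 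By the characterization~(\ref{f4}), it suffices to prove that either $\Phi_1\equiv0$ or $\Phi_2\equiv0$: indeed $\Phi_2\equiv0$ forces $\Phi_\la(L_0^2)\subset L^2(\cc_1)$, hence $\la=\la_{Z,Y_1}\ot\nu_2$, and symmetrically for $\Phi_1\equiv0$.

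Suppose for contradiction that both are nonzero, and choose $f,g_1$ with $\langle\Phi_1 f,g_1\rangle\neq0$ and $f',g_2$ with $\langle\Phi_2 f',g_2\rangle\neq0$, where $f,f'\in L_0^2\zdr$, $g_1\in L_0^2(Y_1)$, $g_2\in L_0^2(Y_2)$. The key step is to apply the mean ergodic theorem for the ergodic system $(Z\times Y_1\times Y_2,\la)$ under $U=R\times S_1\times S_2$ to the pair $F=f\ot g_1\ot 1$ and $G=f'\ot 1\ot g_2$. Ergodicity of $\la$ gives
\beq\label{ergavg}
\frac1N\sum_{n=0}^{N-1}\langle U^nF,G\rangle\longrightarrow \Bigl(\int F\,d\la\Bigr)\ov{\Bigl(\int G\,d\la\Bigr)}=\langle\Phi_1 f,g_1\rangle\,\ov{\langle\Phi_2 f',g_2\rangle},
\eeq
the right-hand side being the product of the two nonzero numbers we selected.

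The main point, and the step I expect to require the most care, is that \emph{every} term on the left of~(\ref{ergavg}) is in fact zero. Writing $U^nF=(R^nf)\ot(S_1^n g_1)\ot 1$, each correlation equals $\int\bigl(R^nf\cdot\ov{f'}\bigr)\ot(S_1^ng_1)\ot\ov{g_2}\,d\la$, with $S_1^ng_1\in L_0^2(Y_1)$ and $\ov{g_2}\in L_0^2(Y_2)$. Decomposing the $Z$-function as $R^nf\cdot\ov{f'}=c_n+h_n$ with $c_n=\int R^nf\cdot\ov{f'}\,d\rho$ and $h_n\in L_0^2\zdr$, the $h_n$-contribution vanishes by~(\ref{triple}), while the $c_n$-contribution equals $c_n\int(S_1^ng_1)\ot\ov{g_2}\,d\la_{Y_1,Y_2}$, which is zero because the $(Y_1,Y_2)$-marginal of any element of $J(R,S_1\times S_2)$ is the product $\nu_1\ot\nu_2$, so $L_0^2(Y_1)\perp L_0^2(Y_2)$ under $\la$. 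Hence the Cesàro averages are identically $0$, forcing the limit in~(\ref{ergavg}) to be $0$ and contradicting the choice of $f,f',g_1,g_2$. Therefore one of $\Phi_1,\Phi_2$ vanishes, which by the reduction above yields $\la=\la_{Z,Y_1}\ot\nu_2$ or $\la=\la_{Z,Y_2}\ot\nu_1$.
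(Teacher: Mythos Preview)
Your argument is correct, and it takes a genuinely different route from the paper's. The paper rewrites the hypothesis as the operator identity $\Phi_\la+E=p_1\circ\Phi_\la+p_2\circ\Phi_\la$ (with $p_i$ the projection onto $L^2(Y_i)$ and $E$ onto constants), recognizes each $p_i\circ\Phi_\la$ as the Markov operator of the marginal joining, and passes to the measure identity
\[
\la+\rho\ot\nu_1\ot\nu_2=\la_{Z,Y_1}\ot\nu_2+\la_{Z,Y_2}\ot\nu_1.
\]
Since all four measures are ergodic joinings (here the weak mixing of $S_1,S_2$ is used so that $\la_{Z,Y_i}\ot\nu_j$ and the product joining are ergodic), extremality forces $\la$ to equal one of the two summands on the right.

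Your proof bypasses the extremality argument entirely: you reduce to showing $\Phi_1=0$ or $\Phi_2=0$, assume both are nonzero, and derive a contradiction from a single correlation computation via the mean ergodic theorem for $(\la,R\times S_1\times S_2)$. This is more hands-on but also more self-contained, and it has a small bonus: you only use the ergodicity of $\la$ and the fact that the $(Y_1,Y_2)$-marginal is $\nu_1\ot\nu_2$, never the weak mixing of the $S_i$. The paper's proof really needs that extra hypothesis to guarantee ergodicity of the four joinings in the displayed identity. In the intended application (Proposition~\ref{mar2}) the $S_i$ are weakly mixing anyway, so nothing is gained there, but your version proves the lemma exactly as stated.
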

 \begin{proof}
 Denoting by $p_i$ the orthogonal projection from $L^2(Y_1\times
Y_2,\nu_1\ot\nu_2)$ on
 $L^2(Y_i,\nu_i)$ and $E$ the orthogonal projection on constant
functions,
 by the assumption
 \beq\label{d1}
 \Phi_\lambda+E=
 p_1\circ
 \Phi_\lambda+p_2\circ\Phi_\lambda.
 \eeq
 But  $p_i\circ \Phi_\lambda$ is still a Markov operator, so
$p_1\circ \Phi_\lambda =
 \Phi_{\lambda_i}$ for some joining $\la_i$, $i=1,2$.
 Now the condition in (\ref{d1}) simply means that
 $$
 \lambda+\mu\ot\nu_1\ot\nu_2=\lambda_{X,Y_1}\ot \nu_2+
\lambda_{X,Y_2}\ot \nu_1
 $$
 and since the four joinings under consideration are ergodic
 (the factor joinings $\lambda_{Z,Y_i}$ are ergodic\
 and the $S_i$ are weakly mixing), the assertion follows.
 \end{proof}

\begin{Prop}\label{mar2}
If $R$ is weakly mixing and has the CS property  then it has the JP
property.
\end{Prop}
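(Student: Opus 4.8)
The plan is to reduce Proposition~\ref{mar2} to Lemma~\ref{mar1}, so the whole task becomes: show that the CS hypothesis forces $\Phi_\lambda(L^2_0(\cal D))$ to lie inside $L^2_0(\cc_1)\oplus L^2_0(\cc_2)$ for any $\lambda\in J^e(R,S_1\times S_2)$. First I would recall the basic spectral constraint on Markov intertwiners: since $\Phi_\lambda\circ R = (S_1\times S_2)\circ\Phi_\lambda$, the operator $\Phi_\lambda$ intertwines the unitary $R$ with $S_1\times S_2$, so for $f\in L^2_0(\cal D)$ the spectral measure of $\Phi_\lambda(f)$ is absolutely continuous with respect to the maximal spectral type $\sigma_R$. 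The orthogonal decomposition of $L^2_0(Y_1\times Y_2)$ coming from the product structure is
\[
L^2_0(\cc_1)\oplus L^2_0(\cc_2)\oplus\bigl(L^2_0(\cc_1)\ot L^2_0(\cc_2)\bigr),
\]
and the point of the argument is that $\Phi_\lambda(f)$ can have no component in the third, genuinely ``mixed'' summand.

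Next I would analyze the spectral type of that mixed summand. On $L^2_0(\cc_1)\ot L^2_0(\cc_2)$ the unitary $S_1\times S_2$ acts as $S_1\ot S_2$, whose maximal spectral type is the convolution $\sigma_{S_1}\ast\sigma_{S_2}$ of the reduced maximal spectral types of the two factors. Because $S_1$ and $S_2$ are weakly mixing, $\sigma_{S_1}$ and $\sigma_{S_2}$ are continuous measures, so $\sigma_{S_1}\ast\sigma_{S_2}$ is the convolution of two continuous measures. The CS property of $R$ says precisely that $\sigma_R$ is singular with respect to every such convolution. Hence the projection $q$ of $\Phi_\lambda(f)$ onto the mixed summand has spectral measure simultaneously absolutely continuous with respect to $\sigma_R$ (as it is a component of $\Phi_\lambda(f)$, which intertwines $R$) and absolutely continuous with respect to $\sigma_{S_1}\ast\sigma_{S_2}$ (as it lives in the mixed summand); by singularity this forces $q=0$. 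Therefore $\Phi_\lambda(f)\in L^2_0(\cc_1)\oplus L^2_0(\cc_2)$ for every $f\in L^2_0(\cal D)$, which is exactly the hypothesis of Lemma~\ref{mar1}, and that lemma yields $\lambda=\lambda_{Z,Y_1}\ot\nu_2$ or $\lambda=\lambda_{Z,Y_2}\ot\nu_1$, establishing JP.

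The step I expect to be the main obstacle is the spectral bookkeeping that the projection onto the mixed summand still intertwines $R$ with the restriction of $S_1\times S_2$, so that its spectral type is controlled by $\sigma_R$ on one side and by $\sigma_{S_1}\ast\sigma_{S_2}$ on the other. Concretely, one must check that the orthogonal projection onto each of the three $S_1\times S_2$-invariant summands commutes with $S_1\times S_2$ (which is clear, as each summand is invariant), and then argue that applying such a projection to $\Phi_\lambda(f)$ does not enlarge its spectral type beyond $\sigma_R$. The cleanest way is to note that $p_1\circ\Phi_\lambda$, $p_2\circ\Phi_\lambda$ and the mixed component are all again Markov-type intertwiners (or at least $R$-to-$(S_1\times S_2)$ intertwining contractions), so each carries spectral measures dominated by $\sigma_R$; the identification of the mixed summand's spectral type with a convolution of continuous measures is then the standard fact about tensor products of unitaries. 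Once both absolute-continuity facts are in place, the mutual singularity coming from CS closes the argument immediately.
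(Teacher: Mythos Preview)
Your proposal is correct and follows essentially the same approach as the paper: bound the spectral type of $\Phi_\lambda(f)$ by $\sigma_R$ via the intertwining relation, decompose $L^2_0(Y_1\times Y_2)$ into the three natural summands, observe that the mixed summand carries spectral type $\sigma_{S_1}\ast\sigma_{S_2}$ (a convolution of continuous measures by weak mixing), use CS to kill the mixed component, and finish with Lemma~\ref{mar1}. The spectral bookkeeping you flag as the main obstacle is exactly what the paper dismisses as a ``general fact,'' and your resolution---that the projections onto the invariant summands commute with $S_1\times S_2$ and hence can only shrink spectral measures---is the right one.
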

\begin{proof}
For each $f\in L^2\zdr$, it is a general fact that the spectral
measure of $\Phi_\lambda(f)$
is absolutely continuous with respect to the spectral measure of $f$
and therefore
with respect to $\sigma_R$. Since
\[
L_0^2(Y_1\times Y_2,\nu_1\ot\nu_2)= L_0^2(Y_1,\nu_1)\oplus
L_0^2(Y_2,\nu_2)\oplus
 \left(L_0^2(Y_1,\nu_1)\otimes L_0^2(Y_2,\nu_2)\right)
\]
and on 
$L_0^2(Y_1,\nu_1)\otimes L_0^2(Y_2,\nu_2)$
the maximal spectral type is the convolution
of the reduced maximal spectral types of $S_1$ and $S_2$, we obtain
the assumption of Lemma~\ref{mar1} satisfied. Therefore, the
result follows.
 \end{proof}

We will give examples of automorphisms with the CS property in
Section \ref{flows}.

\section{Joining primeness property and symmetric factors}\label{ID}

The aim of this section is to prove that JP systems are disjoint from
dynamical systems that possess the probabilistic property of
\emph{infinite
divisibility} (not to be mistaken with the ergodic theoretical notion
sharing the same name, see next section).

Let us recall the definition of an infinitely divisible (ID)
stationary
process:

A stationary process $\left\{ X_{n}\right\} _{n\in\bf{Z}}$ of
distribution $P$ on $\left(\bf{R}^{\bf{Z}},\cb^{\otimes\bf{Z}}\right)$
is ID if, for any integer $k$, there exists a probability
distribution $P_{k}$
on $\left(\bf{R}^{\bf{Z}},\cb^{\otimes\bf{Z}}\right)$ such that
$P=P_{k}*\dots*P_{k}$ ($k$ terms) where $*$ denotes the convolution
operation with respect to
the usual coordinatewise addition on $\bf{R}^{\bf{Z}}$.
In other words, the stationary process $\left\{ X_{n}\right\}
_{n\in\bf{Z}}$ is ID if, 
for any integer $k$, it can be realized as an independent sum
$X_{n}=X_{n}^{\left(1,k\right)}+\dots+X_{n}^{\left(k,k\right)}$ where
the 
processes $\left\{ X_{n}^{\left(i,k\right)}\right\}
_{n\in\mathbb{Z}}$ are stationary
with the same distribution $P_{k}$.
If $P$ is infinitely divisible, then the distributions $P_{k}$ are
uniquely determined
and are also infinitely divisible. Moreover if
$\left({\bf{R}^{\bf{Z}}},\cb^{\otimes\bf{Z}},P,S\right)$ (where $S$
stands for the shift) is ergodic,
then it is weakly mixing (see \cite{Ros}) 
and so is
$\left({\bf{R}^{\bf{Z}}},\cb^{\otimes\bf{Z}},P_{k},S\right)$ for any
$k$.
Any Gaussian stationary process is infinitely divisible as well as any
stable stationary process among many others.

The important ergodic property here that will play the key role for
our disjointness result comes from the following observation:
{\em the system determined by a stationary ID process is, for any
integer $k\ge 1$, a factor of the symmetric
factor of the cartesian $k$-th power of a dynamical system (by
symmetric factor we mean the sub-$\sigma$-algebra of subsets
which are invariant under all permutations of coordinates).}
Indeed, $\left({\bf{R}^{\bf{Z}}},\cb^{\otimes\bf{Z}},P,S\right)$
is a factor of
$\left({\bf{R}^{\bf{Z}}},\cb^{\otimes\bf{Z}},P_{k},S\right)^{\times
k}$
via the factor map
$\left(\left\{ x_{n}^{1}\right\} _{n\in\bf{Z}},\dots,\left\{
x_{n}^{k}\right\}_{n\in\bf{Z}}\right)
\mapsto\left\{ x_{n}^{1}+\dots+x_{n}^{k}\right\}_{n\in\bf{Z}}$.

Closely related to ID stationary processes are Poisson suspensions.
Let us recall the definition.

Consider a dynamical system $\left(X,\ca,\mu,T\right)$ where $\mu$ is
$\sigma$-finite and
form the Poisson measure $\left(X^{*},\ca^{*},\mu^{*}\right)$
where $X^{*}$ is the space of counting measures on
$\left(X,\ca\right)$,
$\ca^{*}$ the smallest $\sigma$-algebra such that for any
$A\in\ca^{*}$
the map $N_{A}:\nu\mapsto\nu\left(A\right)$ is measurable and
$\mu^{*}$ is
the unique probability measure such that for any family
$A_{1},\dots,A_{k}$
of disjoint sets in $\ca$ of finite $\mu$-measure, the random
variables $N_{A_{i}}$ are independent and Poisson distributed with
parameters $\mu(A_{i})$. Let furthermore $T_{*}:\ X^{*}\to X^{*}$ be
the map which assigns to each
$\nu\in X^{*}$ its direct image under $T$.
Then $T_{*}$ is $\mu^{*}$-preserving and
$\left(X^{*},\ca^{*},\mu^{*},T_{*}\right)$
is called the \emph{Poisson suspension} over the \emph{base}
$\left(X,\ca,\mu,T\right)$
(see \cite{Co-Fo-Si}, and also recent \cite{De-Fr-Le-Pa}, \cite{Ro},
\cite{Ro1} for the joining theory of
Poisson suspensions). Convolution in this space is well defined,
arising from
addition of measures. It is well known that
$\mu^{*}=\left(\frac{1}{k}\mu\right)^{*}*\dots*\left(\frac{1}{k}\mu\right)^{*}$
($k$ terms) for every $k\ge 1$, and this shows the ID character of a
Poisson suspension.
We therefore derive the same conclusion as above: for any integer
$k$ a Poisson suspension is a factor of the symmetric factor of a
direct product of $k$ copies of a dynamical system.

The disjointness of JP systems from systems given by ID stationary
processes
and Poisson suspensions is based on the relationship between JP
systems and symmetric factors.
In fact, it will be a consequence of a bit more general criterion
given in
Proposition \ref{l3} below. We first need a lemma.

Let $S$ be a weakly mixing automorphism of a standard probability
space $\ycn$. Given $n\geq1$, by $\cf_n=\cf_n(\cc)=\cf_n(S)$ we
denote the {\em
  symmetric factor} of $S^{\times n}$.

\begin{Prop}\label{l3} Let $T$ be an automorphism of a standard
probability
 space $\xbm$. Assume that, for each
$g\in L^2\xbm$,  there exist a sequence $(k_j)_{j\ge1}$ of integers
going
to infinity and a sequence $(S_j)_{j\ge1}$ of weakly mixing
automorphisms such that $T$ is a factor of $S_j^{\times
k_j}$ and moreover
$$
dist(g,L^2(\cf_{k_j}(S_j))=\mbox{o}\left(\frac1{\sqrt{k_j}}\right)\cdotp$$
Then $T$ is disjoint from every weakly mixing JP system.
\end{Prop}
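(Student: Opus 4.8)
The plan is to argue at the level of Markov operators: it suffices to show that for every \emph{ergodic} $\la\in J^e(T,R)$ the operator $\Phi_\la\colon L^2\xbm\to L^2\zdr$ annihilates $L^2_0\xbm$, because an arbitrary joining is an integral of ergodic joinings with the same (ergodic) marginals, so once each ergodic component satisfies $\Phi|_{L^2_0}=0$ we get $\la=\mu\ot\rho$, i.e. $T\perp R$. So I fix an ergodic $\la$ and a function $g\in L^2_0\xbm$, and let $(k_j)$ and $(S_j)$ be the sequences supplied by the hypothesis, with factor maps $\pi_j\colon S_j^{\times k_j}\to T$. Writing $G=g\circ\pi_j$, an isometric copy of $g$ lying in $L^2_0$ of the $k_j$-fold product, I let $\hat\la$ be the joining of $S_j^{\times k_j}$ with $R$ obtained by lifting $\la$ relatively independently over the common factor $X$. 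A direct computation with the disintegrations over $X$ shows $\Phi_{\hat\la}(G)=\Phi_\la(g)$, since $G$ is $\pi_j$-measurable.

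Next I would pass to the ergodic decomposition $\hat\la=\int\hat\la_\omega\,dP(\omega)$. As $S_j^{\times k_j}$ and $R$ are ergodic and their marginals are extremal, almost every $\hat\la_\omega$ is an ergodic joining of $R$ with the product $S_j\times\dots\times S_j$ of $k_j$ weakly mixing systems, and $\Phi_{\hat\la}=\int\Phi_{\hat\la_\omega}\,dP$ (all components share the $Z$-marginal $\rho$, so the operators land in the same $L^2\zdr$). The $n$-fold form of the JP property, (\ref{jlambda})--(\ref{f4}), then yields for each such $\omega$ a coordinate $j(\omega)$ for which the Markov operator of $\hat\la_\omega$ in the orientation $L^2\zdr\to L^2$ of the product space maps $L^2_0\zdr$ into $L^2(\cc_{j(\omega)})$. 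Dualizing this statement and using $G\in L^2_0$, one obtains $\Phi_{\hat\la_\omega}(G)=\Phi_{\hat\la_\omega}(Q_{j(\omega)}G)$, where $Q_i$ denotes the orthogonal projection onto the mean-zero functions of the $i$-th coordinate; since $\Phi_{\hat\la_\omega}$ is an $L^2$-contraction, $\|\Phi_{\hat\la_\omega}(G)\|\le\|Q_{j(\omega)}G\|$.

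The remaining point is a first-chaos estimate that converts approximate symmetry into smallness. Let $G_{\mathrm{sym}}$ be the orthogonal projection of $G$ onto $L^2(\cf_{k_j}(S_j))$, so that $\|G-G_{\mathrm{sym}}\|=\mathrm{dist}(g,L^2(\cf_{k_j}(S_j)))=\mathrm{o}(1/\sqrt{k_j})$ while $\|G_{\mathrm{sym}}\|\le\|g\|$. Because coordinate permutations act unitarily, permute the ranges of the projections $Q_i$, and fix $G_{\mathrm{sym}}$, the vectors $Q_1G_{\mathrm{sym}},\dots,Q_{k_j}G_{\mathrm{sym}}$ are mutually orthogonal and of equal norm, whence $\|Q_iG_{\mathrm{sym}}\|\le\|G_{\mathrm{sym}}\|/\sqrt{k_j}\le\|g\|/\sqrt{k_j}$ for every $i$. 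Combining, $\|Q_iG\|\le\|g\|/\sqrt{k_j}+\mathrm{o}(1/\sqrt{k_j})$ uniformly in $i$, so integrating over $\omega$ gives $\|\Phi_\la(g)\|=\|\Phi_{\hat\la}(G)\|\le\int\|Q_{j(\omega)}G\|\,dP(\omega)\le\|g\|/\sqrt{k_j}+\mathrm{o}(1/\sqrt{k_j})$. Letting $j\to\infty$ forces $\Phi_\la(g)=0$, and as $g\in L^2_0\xbm$ was arbitrary we conclude $\la=\mu\ot\rho$.

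I expect the crux to be exactly this last estimate: recognizing that the JP property concentrates the image of each $\hat\la_\omega$ on a single coordinate, while (approximate) symmetry forces the first-order, single-coordinate component of $G$ to carry only an $\mathrm{O}(1/\sqrt{k_j})$ fraction of its mass. The supporting bookkeeping---that the relatively independent lift is a genuine joining with $\Phi_{\hat\la}(G)=\Phi_\la(g)$, that the ergodic components have the correct marginals so that the JP property applies, and that the orientation of the Markov operator matches (\ref{f4}) after taking adjoints---is routine but must be handled carefully, since a wrong orientation would apply the JP conclusion on the wrong space.
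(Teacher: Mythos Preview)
Your proof is correct. Both arguments lift to $S^{\times k}$, take an ergodic decomposition, and apply the JP property componentwise, but the final estimates proceed differently. The paper works in the opposite orientation: it fixes $f\in L^2_0\zdr$, sets $g=\Phi_{\lambda_0}(f)\in L^2\xbm$, and obtains an orthogonal splitting $g=\sum_{j=1}^k f_j$ with $f_j\in L^2(\cc_j)$ by grouping the ergodic components according to the index $j(\lambda)$; a pigeonhole argument then produces some $j_0$ with $\|f_{j_0}\|\le\|f\|/k$, and a permutation sends $f_{j_0}$ to $f_j$ while moving $g$ by at most $2\,\mathrm{dist}(g,L^2(\cf_k))$, forcing all $\|f_j\|$ close to $\|f_{j_0}\|$ and hence $\|g\|\le\|f\|/\sqrt{k}+2\vep$. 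Your route instead dualizes the JP conclusion so that each $\Phi_{\hat\lambda_\omega}$ factors through the single-coordinate projection $Q_{j(\omega)}$, and then bounds $\|Q_iG\|$ \emph{uniformly in $i$} via the orthogonality and equal-norm property of the $Q_iG_{\mathrm{sym}}$, which bypasses the pigeonhole step entirely. Your version is a bit slicker and gives the slightly sharper bound $\|g\|/\sqrt{k_j}+o(1/\sqrt{k_j})$; the paper's version avoids the adjoint bookkeeping and keeps the Markov operator in the orientation in which the JP property~(\ref{f4}) is stated.
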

\begin{proof}
Let $\lambda_0\in J(R,T)$, where $R:\zdr\to\zdr$ is a JP automorphism
and let
$f$ be any function in $L^2_0\zdr$.
We have to prove that $g:=\Phi_{\lambda_0}(f)=0$.

By the assumption, given an arbitrary $\vep>0$, there exist an
arbitrarily large
integer $k>0$  and a weakly mixing automorphism $S$ on a standard
space $\ycn$ and
such that $T$ is a factor of $S^{\times k}$ and moreover
\beq\label{f0}
dist(g,L^2(\cf_{k}(S))\le\frac{\vep}{\sqrt k}\;\cdotp
\eeq
Let $\widetilde{\lambda}_0$ be the relatively independent extension
of $\lambda_0$ to a
joining of $R$ with $S^{\times k}$, so that
$\Phi_{\widetilde{\lambda}_0}$ is the composition
of $\Phi_{\lambda_0}$ and the embedding of $L^2\xbm$ into
$L^2(Y^{\times n},\cc^{\otimes n},\nu^{\otimes n})$, and thus
$\Phi_{\widetilde{\lambda}_0}(f)= \Phi_{\lambda_0}(f)=g$.

Consider now an ergodic decomposition of $\widetilde{\lambda}_0$:
$$
\widetilde{\lambda}_0=\int_{J^e(R,S^{\times k})} \lambda dP(\lambda).
$$
For $\lambda\in J^e(R,S^{\times k})$, we write the underlying
$\si$-algebra as $\cd\vee \cc_{1}\vee\ldots\vee \cc_{k}$,
where the $\cc_{i}$'s algebras remain independent. When
$\Phi_{\lambda}(f)\ne 0$, since $R$ has
the JP property, there exists exactly one $1\leq j=j(\lambda)\leq k$
such that
$\Phi_{\lambda}(f)\in L^2(\cc_{j(\lambda)})$.

For $1\le j\le k$, denote by $M_j$ the set of those $\lambda\in
J^e(R,S^{\times k})$ for which 
$\Phi_{\lambda}(f)\ne 0$ and $j(\lambda)=j$.  The sets $M_j$ are
disjoint and clearly measurable. Let
$$f_j:=\int_{M_j}\Phi_{\lambda}(f)\,dP\in L^2(\cc_j).$$
We obtain the orthogonal decomposition
$$
g=\int_{J^e(R,S^{\times k})} \Phi_{\lambda}(f)\,dP=\sum_{j=1}^k
f_j\in\bigoplus_{j=1}^k L^2(\cc_j)
$$
and moreover there exists some $j_0\in \{1,\ldots, k\}$ such that
$P(M_{j_0})\leq \frac1k$, whence
\beq\label{f7}
\|f_{j_0}\|\leq \frac1k\|f\|.\eeq
Now we claim that for all $1\le j\le k$
\beq\label{f8}
\bigl|\|f_j\|-\|f_{j_0}\|\bigr|\leq\frac{2\vep}{\sqrt k}\;\cdotp
\eeq
Indeed, consider a permutation $\pi$ of $\{1,\ldots ,k\}$ sending
$j_0$ to $j$, and the
corresponding unitary operator $U_{\pi}$ on $L^2(Y^{\times
n},\cc^{\ot n},\nu^{\ot n})$.
Since all functions in $L^2(\cf_k(S))$ are fixed by $U_\pi$,
$$
\|f_{j}-f_{j_0}\|\le\left(\sum_{\ell=1}^k
\|f_{\pi(\ell)}-f_\ell\|^2_{L^2(\cc)}\right)^{1/2}
=\|U_\pi(g)-g\|\leq 2\,\text{dist}(g,L^2(\cf_{k}(S))
$$
and (\ref{f8}) follows from (\ref{f0}).

So, by (\ref{f7}), $\|f_j\|\le \frac1k\|f\|+\frac{2\vep}{\sqrt k}$
for all $1\le j\le k$ and
$$
\|g\|=\left(\sum_{j=1}^k\|f_j\|^2\right)^{1/2}\leq
\frac1{\sqrt k}\|f\|+ 2\vep.
$$
Since $k$ was arbitrarily large and $\vep$ was arbitrarily small,
this proves that $g=0$.
\end{proof}

\begin{Remark}\em
We notice that $T$ satisfying the assumption of Proposition~\ref{l3}
is still disjoint from all roots of JP maps. Indeed, if $T$ satisfies
the assumptions of
Proposition~\ref{l3} then each (non-zero) power of it  does so;
moreover if two automorphisms are non-disjoint then also their
powers are non-disjoint and the disjointness result with roots of
JP maps follows.
\end{Remark}

\section{JP property and infinite divisibility in the ergodic
theoretical sense}\label{ErgoID}

Let us recall that an ergodic automorphism $T$ is said to be {\em
infinitely divisible} if there exists a sequence of factors
$\{\cb_\omega:\:\omega \in\{0,1\}^\ast\}$ of $\cb$ where
$\cb_{\vep}=\cb$,
$\cb_\omega=\cb_{\omega0}\ot\cb_{\omega1}$ and for each $f\in
L_0^2\xbm$, $\eta\in\{0,1\}^{\N}$
\beq\label{d2} \lim_{n\to
\infty}E(f |\cb_{\eta[0,n)})=0.
\eeq

As in the introduction, to avoid confusion, this ergodic theoretical
notion of infinite
divisibility will be referred to as {\em ergodically ID} in the
sequel.

\begin{Prop}\label{mar4} Ergodically ID automorphisms are disjoint
from JP systems.
\end{Prop}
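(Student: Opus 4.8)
The plan is to show that every joining $\la\in J(R,T)$ of a weakly mixing JP system $R$ on $\zdr$ with an ergodically ID system $T$ on $\xbm$ is the product measure. By ergodic decomposition it suffices to treat $\la\in J^e(R,T)$: both marginals are the (uniquely ergodic) measures $\rho$ and $\mu$, so almost every ergodic component of $\la$ again lies in $J^e(R,T)$, and if each such component is $\rho\ot\mu$ then so is $\la$. Proving $\la=\rho\ot\mu$ amounts to showing $g:=\Phi_\la(f)=0$ for every $f\in L^2_0\zdr$. So fix such an $f$, set $g=\Phi_\la(f)\in L^2_0\xbm$; the whole argument consists in trapping $g$ inside smaller and smaller factors of the splitting tree and then invoking (\ref{d2}).

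First I would record a spectral reduction. As in the proof of Proposition~\ref{mar2}, the spectral measure of $g=\Phi_\la(f)$ is absolutely continuous with respect to that of $f$; since $R$ is weakly mixing, $f\in L^2_0\zdr$ has continuous spectral measure, hence so does $g$. Thus $g$ is orthogonal to every eigenfunction of $T$, i.e. $g$ lies in the continuous part of $L^2\xbm$. This is what neutralises the Kronecker factor of $T$ (which a priori need not be trivial) and lets the joining--primeness machinery engage.

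The core step applies the JP property level by level. Assume first that each splitting factor $\cb_\omega$ is weakly mixing — automatic when $T$ itself is weakly mixing, since factors of a weakly mixing system are weakly mixing. Fix $n$ and write the level-$n$ splitting as $\cb=\bigotimes_{\omega\in\{0,1\}^n}\cb_\omega$, an independent product of the $2^n$ weakly mixing factors $\cb_\omega$; then $\la$ is an ergodic joining of $R$ with a finite product of weakly mixing systems. By the $n$-fold form of the JP property, (\ref{jlambda})--(\ref{f4}), as soon as $g\ne0$ there is exactly one word $\omega^{(n)}\in\{0,1\}^n$ with $g\in L^2(\cb_{\omega^{(n)}})$. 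These choices are consistent: since $\cb_{\omega^{(n+1)}}\subset\cb_{\omega^{(n+1)}[0,n)}$, the function $g$ is measurable with respect to the parent factor $\cb_{\omega^{(n+1)}[0,n)}$, and uniqueness at level $n$ forces $\omega^{(n+1)}[0,n)=\omega^{(n)}$. Hence the $\omega^{(n)}$ are the initial segments of a single path $\eta\in\{0,1\}^{\N}$, and $g\in L^2(\cb_{\eta[0,n)})$, that is $g=E(g\,|\,\cb_{\eta[0,n)})$, for every $n$. Now infinite divisibility finishes the job: applying (\ref{d2}) to $g\in L^2_0\xbm$ along the path $\eta$ gives $E(g\,|\,\cb_{\eta[0,n)})\to0$, and since each term equals $g$ we get $g=0$. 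As $f$ was arbitrary, $\Phi_\la$ is the projection onto the constants, so $\la=\rho\ot\mu$ and $R\perp T$.

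The step I expect to be the main obstacle is precisely the applicability of the JP property at each level: it is stated only for products of weakly mixing systems, whereas a priori the splitting factors $\cb_\omega$ of an ergodically ID system need not be weakly mixing (an ergodically ID system can carry a nontrivial Kronecker factor). The spectral reduction of the second paragraph is designed to remove exactly this difficulty: it confines $g$ to the continuous spectrum, where the Kronecker obstruction disappears, so that only the weakly mixing behaviour of the factors is felt. Making this reduction interact cleanly with (\ref{f4}) at every finite level — i.e. justifying the single-block conclusion for a continuous-spectrum $g$ even when the $\cb_\omega$ are not themselves weakly mixing — is the delicate point of the argument, and is where I would spend most of the effort.
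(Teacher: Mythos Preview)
Your core argument is exactly the paper's: for $\la\in J^e(R,T)$ and each level $n$, the JP property applied to the product decomposition $\cb=\bigotimes_{|\omega|=n}\cb_\omega$ forces $\Phi_\la(f)\in L^2(\cb_\omega)$ for a single $\omega$, and then~(\ref{d2}) finishes. Your consistency step assembling the $\omega^{(n)}$ into a single path $\eta$ just spells out what the paper compresses into ``the result follows immediately from~(\ref{d2})''.

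On the weak-mixing worry: the paper's proof is equally silent---it simply invokes JP for the product $\bigotimes_{|\omega|=n}\cb_\omega$ without checking that the factors are weakly mixing. In the paper's setting this causes no trouble, since the ergodically ID systems actually under consideration (Gaussian systems, Poisson suspensions, the inverse limits built in Proposition~\ref{Fran1}) are weakly mixing, whence so is every $\cb_\omega$. Your spectral-reduction observation is correct as stated (the spectral measure of $\Phi_\la(f)$ is continuous when $R$ is weakly mixing), but it does not by itself yield the single-block conclusion~(\ref{f4}) when the $\cb_\omega$ carry eigenfunctions: the proof of Lemma~\ref{mar1}, on which~(\ref{f4}) ultimately rests, uses weak mixing of the $S_i$ to ensure that $\la_{Z,Y_i}\otimes\nu_j$ is ergodic. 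So the clean route is to take weak mixing of $T$ (hence of all $\cb_\omega$) as a standing hypothesis---as the paper implicitly does---rather than to try to repair the JP step via spectral considerations; under that hypothesis your proof is complete and coincides with the paper's.
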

\begin{proof} Assume that $R$ is a JP automorphism, and let $\la\in
J^e(R,T)$,
where $T$ is ergodically ID. For each $f\in L^2_0\zdr$ and $n\geq1$,
by the
JP property of $R$, there is an $\omega\in\{0,1\}^n$ such that
$\Phi_\lambda(f)$ belongs to $L^2(\cb_\omega)$ (indeed,
$\cb=\bigotimes_{|\omega|=n}\cb_\omega$). Then the result follows
immediately from~(\ref{d2}).\end{proof}

In order to complete the picture we will now give a general
argument which in particular shows that dynamical systems induced
by stationary ID processes are factors of ergodically ID dynamical
systems.

\begin{Prop}\label{Fran1}
Let $T$ be an automorphism of a standard probability
space $\xbm$. Assume that there exists a sequence $(S_n)_{n\ge 0}$ of
automorphisms of standard probability spaces $(Y_n,\cc_n,\nu_n)$ such
that $S_0=T$
and, for every $n\ge 0$, $S_n$ is isomorphic to a factor of the
symmetric factor
of $S_{n+1}\times S_{n+1}$. Then $T$ is a factor of an  ergodically
ID system.
\end{Prop}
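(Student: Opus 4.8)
The plan is to realize the required ergodically ID system as an inverse limit of \emph{full} Cartesian powers, using the hypothesis only to manufacture the connecting maps. For each $n\ge0$ the assumption gives a factor map of $S_{n+1}\times S_{n+1}$ onto $S_n$ that factors through the symmetric factor $\cf_2(S_{n+1})$; write it as $\phi_n\colon Y_{n+1}\times Y_{n+1}\to Y_n$ and record the one feature that drives the whole argument, namely that it is \emph{symmetric}: $\phi_n(a,b)=\phi_n(b,a)$. Set $Z_n=Y_n^{\times 2^n}$ and $R_n=S_n^{\times 2^n}$ with the product measure, and index the $2^n$ coordinates by the words $\omega\in\{0,1\}^n$. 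Applying $\phi_n$ to each sibling pair $(\omega 0,\omega 1)$ defines, coordinatewise, an equivariant measure-preserving factor map $\psi_n\colon Z_{n+1}\to Z_n$, so that $(Z_n,R_n,\psi_n)_{n\ge0}$ is an inverse system whose inverse limit $(\widehat Y,\widehat\cc,\widehat\nu,\widehat T)$ has $T=S_0=R_0$ as its bottom factor. It then remains to equip $\widehat T$ with an ID tree skeleton.

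Next I would write down that skeleton. For $\omega\in\{0,1\}^\ast$ with $|\omega|=n$, let $\cg_\omega$ be the sub-$\sigma$-algebra generated by all coordinates $\tau$ (at every level $m\ge n$) that extend $\omega$, i.e.\ the ``subtree rooted at $\omega$''. Then $\cg_\epsilon=\widehat\cc$, each $\cg_\omega$ is $\widehat T$-invariant (the action is coordinatewise), and I claim $\cg_\omega=\cg_{\omega 0}\ot\cg_{\omega 1}$: generation holds because every coordinate strictly below $\omega$ extends $\omega 0$ or $\omega 1$, while the level-$n$ coordinate $\omega$ itself is $\phi_n$ of its two children and so already lies in $\cg_{\omega 0}\vee\cg_{\omega 1}$; independence holds because the two subtrees involve disjoint coordinate families carried by product measures, a structure the connecting maps respect. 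This is exactly the skeleton of the definition.

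The main obstacle is the vanishing condition~(\ref{d2}): for every $f\in L^2_0(\widehat Y)$ and every path $\eta\in\{0,1\}^\N$ one must show $E(f\mid\cg_{\eta[0,n)})\to0$, and since the $\cg_{\eta[0,n)}$ decrease it suffices to show the norms tend to $0$. By density I may take $f$ to be a cylinder depending on the level-$N$ coordinates. Because the spine block $\cg_{\eta[0,n)}$ ($n>N$) is independent of every off-spine level-$N$ block, conditioning integrates those out and reduces the problem to $g=E(f\mid y_{\eta[0,N)})\in L^2_0(Y_N)$, a function of the single spine coordinate. The crux is the one-step operator $P_m\colon L^2_0(Y_m)\to L^2_0(Y_{m+1})$, $P_m g(a)=\int g(\phi_m(a,b))\,d\nu_{m+1}(b)$, describing passage to the spine child. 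Pulling back along the \emph{symmetric} map $\phi_m$ makes $g\circ\phi_m$ a symmetric mean-zero function on $Y_{m+1}\times Y_{m+1}$, so its component in $L^2_0(Y_{m+1})\ot 1\oplus 1\ot L^2_0(Y_{m+1})$ has the shape $A(a)+A(b)$ with one and the same $A$; hence $P_m g=A$, whereas $\|g\|^2=\|g\circ\phi_m\|^2=2\|A\|^2+\|B\|^2\ge 2\|P_m g\|^2$. Thus each descent contracts the $L^2_0$-norm by the factor $1/\sqrt2$ — precisely where symmetry of $\phi_m$ is indispensable, since without it one would only get $\|P_m g\|\le\|g\|$. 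Iterating, $E(f\mid\cg_{\eta[0,n)})=(P_{n-1}\cdots P_N g)(y_{\eta[0,n)})$ has norm $\le 2^{-(n-N)/2}\|g\|\to0$, which establishes~(\ref{d2}).

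Finally, in every case of interest the $S_n$ are weakly mixing (ID stationary processes yield weakly mixing systems), whence the products $R_n$ and therefore the inverse limit $\widehat T$ are weakly mixing, in particular ergodic; combined with the skeleton above this certifies $\widehat T$ as an ergodically ID system having $T$ as a factor. The step I expect to demand the most care in the write-up is the reduction-and-contraction for~(\ref{d2}): verifying cleanly that conditioning on a deep spine block genuinely amounts to integrating out every off-spine sibling and composing the single-step operators $P_m$, and that symmetry delivers the $1/\sqrt2$ contraction.
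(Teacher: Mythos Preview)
Your proposal is correct and follows essentially the same route as the paper's own proof: construct the inverse limit of the $2^n$-fold products $S_n^{\times 2^n}$, take for $\cg_\omega$ the subtree $\sigma$-algebras (the paper's $\cb_\omega=\bigvee_{\eta}\cc_{\omega\eta}$), reduce the vanishing condition to a single spine-coordinate function via independence (the paper's identity $E(f\mid\cb_\omega)=E(f\mid\cc_\omega)$), and obtain the $1/\sqrt2$ contraction from symmetry of the factor maps. Your explicit operator $P_m$ and your closing remark on ergodicity of the inverse limit are the only additions; the paper phrases the contraction directly in terms of conditional expectations and leaves ergodicity implicit.
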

\begin{proof}
Let, at each stage $n\ge 0$, $T_n:=\Pi_{\omega\in\{0,1\}^n}S_\omega$
be the direct product
of $2^n$ copies $S_\omega$ of $S_n$ acting each on a space
$(Y_\omega,\cc_\omega, \nu_\omega)$,
and $T_n$ acting on the product space $(X_n,\cb_n,\mu_n)$.
From the assumption we represent each $S_\omega$ as a factor of  the
symmetric factor of
$S_{\omega 0}\times S_{\omega 1}$. So each $T_n$ is a factor of
$T_{n+1}$ and we obtain an inverse system of dynamical systems.
Let $\widetilde{T}$ stand for the inverse limit, acting on
$(\widetilde{X},\widetilde{\cb},\widetilde{\mu})$.
We will show that $\widetilde{T}$ is ergodically ID.

We consider each  $\cb_n$ and $\cc_\omega$ as sub-$\sigma$-algebras
of $\widetilde{\cb}$: then $\widetilde{\cb}$ is
generated by the union of the increasing sequence of factors $\cb_n$
and each $\cc_\omega$ is contained in the
symmetric factor $\cf_2(\cb_{\omega0},\cb_{\omega1})$. Let moreover,
for $\omega\in\{0,1\}^\ast$
$$
\cb_\omega=\bigvee_{\eta\in\{0,1\}^\ast}\cc_{\omega\eta}.
$$
We obtain a splitting sequence of
independent factors as required in the definition of an ergodically
ID system.
We need to show that~(\ref{d2}) holds true and it is enough here to
consider
$f\in L_0^2(\cb_n)$ for some $n\ge 0$.
Then, for $\omega\in \{0,1\}^n$,
\beq\label{dec1}
E(f|\cb_\omega)=E(f|\cc_\omega).
\eeq
Indeed, it is again enough here to consider $f$ of the form
$f=\ot_{\omega'\in\{0,1\}^n} f_{\omega'}$ with $f_\omega'$ measurable
with respect to $\cc_{\omega'}$, and then $f_{\omega'}$ is still
independent
from $\cb_\omega$ whenever $\omega'\ne \omega$.

Now, represent $E(f|\cc_\omega)$ as an orthogonal sum
$$
E(f|\cc_\omega)=E(f|\cc_{\omega0})+E(f|\cc_{\omega1})+g,$$
Since $\cc_\omega$ is contained in the symmetric factor of
$S_{\omega0}\times S_{\omega1}$,
we have that $E(f|\cc_{\omega1})$ corresponds to $E(f|\cc_{\omega0})$
by the given isomorphism
between $S_{\omega0}$ and $S_{\omega1}$, and in particular it has the
same norm.
Therefore
$$\|E(f|\cc_{\omega i})\|\le \frac{1}{\sqrt2}\|E(f|\cc_{\omega})\|\le
\frac{1}{\sqrt2}\|f\|$$
for $i=0,1$.

We may repeat the same argument and thus we
obtain for every $\eta\in\{0,1\}^\ast$
$$\|E(f|\cc_{\omega\eta})\|
\leq\left(\frac{1}{\sqrt{2}}\right)^{|\eta|}\|f\|.$$
Moreover, since $\cb_n\subset \cb_{p}$ for $p\ge n$, we still have
$E(f|\cb_{\omega\eta})=E(f|\cc_{\omega\eta})$  by (\ref{dec1}) and
the result follows.
\end{proof}

Applying this result to a Poisson suspension
$\left(X^{*},\ca^{*},\mu^{*},T_{*}\right)$
and following the lines of the construction, it can be checked
that the inverse limit is indeed isomorphic to the Poisson suspension
of
\[
\left(X\times[0,1),\;\ca\otimes\mathcal{B}([0,1)),\;\mu\otimes\lambda,\;T\times
Id\right)
\]
where $\lambda$ is Lebesgue measure on $[0,1)$. Indeed, we recall
that  $\left(X^{*},\ca^{*},\mu^{*},T_{*}\right)$
is a factor of the symmetric factor of
$\left(X^{*},\ca^{*},\left(\frac1{2}\mu\right)^{*},T_{*}\right)^{\times
2}$
but this latter system is isomorphic to the Poisson suspension of
\[
\left(X\times\left\{ 0,1\right\},\;\ca\otimes\cp\left(\left\{
0,1\right\} \right),\;
\mu\otimes
\left(\frac{1}{2}\delta_{0}+\frac{1}{2}\delta_{1}\right),\;T\times
Id\right)
\]
and the result follows, proceeding inductively.

\section{Remarks concerning JP property of higher
order}\label{HigherJP}

We can extend the definition of JP, considering a weaker property for
multiple joinings:
an ergodic system $R$ is said to have \emph{the joining primeness
property of order} $n\geq1$ (JP($n$)) if, for every $k\geq n+1$,
every direct product of $k$
weakly mixing automorphisms $S_{1},\ldots,S_{k}$, for every
$\lambda\in J^{e}\left(R,S_{1}\times\ldots\times S_{k}\right)$,
there exist $i_{1},\dots,i_{n}$ in $\{1,\ldots, k\}$ such that
\[
\lambda=\lambda_{Z,Y_{i_1},\ldots,Y_{i_n}}\otimes\bigotimes_{j\neq
i_{1},\dots,i_{n}}\nu_j.
\]

Clearly JP($m$)$\subseteq$JP($n$) whenever $m\leq n$ and JP($1$)
is just the class JP considereded in this paper. Note that, as in
this latter
case, we can replace in the definition possibly non isomorphic
$S_{i}$'s by
isomorphic ones.

\begin{Prop}
JP($n$) systems are disjoint from ergodically ID systems.
\end{Prop}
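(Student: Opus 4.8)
The plan is to run the argument of Proposition~\ref{mar4} (the case $n=1$), the only new feature being that the JP($n$) property confines $\Phi_\lambda(f)$ to a join of at most $n$ (rather than one) of the independent factors at each level, so that the single branch of the tree $\{0,1\}^\ast$ exploited there must be replaced by a bounded family of branches. Precisely, let $R$ be JP($n$), let $T$ be ergodically ID with splitting factors $\{\cb_\omega:\omega\in\{0,1\}^\ast\}$ as in the definition preceding~(\ref{d2}), and fix $\lambda\in J^e(R,T)$ together with $f\in L^2_0$ of the $R$-factor. As in Proposition~\ref{mar4} it suffices to prove $g:=\Phi_\lambda(f)=0$. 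For each level $m$ with $2^m\ge n+1$ I would regard $\cb=\bigotimes_{|\omega|=m}\cb_\omega$ as a Cartesian product of $2^m$ weakly mixing systems (each $\cb_\omega$ being weakly mixing, as already needed for Proposition~\ref{mar4}) and apply the JP($n$) property to the ergodic joining $\lambda$. Using the obvious $n$-fold analogue of~(\ref{f4}), this produces a set $A_m\subseteq\{0,1\}^m$ with $|A_m|\le n$ such that $g\in L^2\bigl(\bigvee_{\omega\in A_m}\cb_\omega\bigr)$.

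Next I would track how these sets sit in the tree. Writing $g=\sum_\Gamma g_\Gamma$ for the orthogonal decomposition of $L^2(\cb)$ along the independent coordinates $\{\cb_\omega:|\omega|=m\}$, so that $g_\Gamma\in\bigotimes_{\omega\in\Gamma}L^2_0(\cb_\omega)$, let $\mathrm{supp}_m(g)$ be the union of those $\Gamma$ with $g_\Gamma\ne0$; then $\mathrm{supp}_m(g)$ is the smallest admissible $A_m$, so $|\mathrm{supp}_m(g)|\le n$. The relation $\cb_\omega=\cb_{\omega0}\otimes\cb_{\omega1}$ refines the decomposition and yields $\mathrm{supp}_m(g)=\mathrm{prefix}(\mathrm{supp}_{m+1}(g))$; hence $|\mathrm{supp}_m(g)|$ is non-decreasing and bounded by $n$, so it stabilises to some $w\le n$ for all $m\ge m_0$. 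Once stabilised, prefix is a bijection between consecutive supports, so the supports trace out exactly $w$ distinct infinite branches $\eta^{(1)},\dots,\eta^{(w)}\in\{0,1\}^{\N}$, and for every $m\ge m_0$
\[
g\in L^2\Bigl(\bigvee_{i=1}^{w}\cb_{\eta^{(i)}[0,m)}\Bigr).
\]

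Finally I would conclude by a tensor/strong-convergence argument. For $m\ge m_0$ the factors $\cb^{(i)}_m:=\cb_{\eta^{(i)}[0,m)}$ are mutually independent (distinct nodes of equal length), so inside $L^2(\cb)$ one has $L^2\bigl(\bigvee_i\cb^{(i)}_m\bigr)=\bigotimes_{i=1}^w L^2(\cb^{(i)}_m)$, and the orthogonal projection onto it is $\Theta_m=\bigotimes_{i=1}^w E(\cdot\,|\cb^{(i)}_m)$, the commuting product of the corresponding conditional expectations. By~(\ref{d2}) each $E(\cdot\,|\cb^{(i)}_m)$ converges strongly to the mean $E(\cdot)$ as $m\to\infty$, and a tensor product of uniformly bounded strongly convergent operators is strongly convergent; hence $\Theta_m\to\bigotimes_i E(\cdot)=E(\cdot)$ strongly on $L^2\bigl(\bigvee_i\cb^{(i)}_{m_0}\bigr)$. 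Since $\Theta_m g=g$ for all $m\ge m_0$, letting $m\to\infty$ gives $g=E(g)=0$ because $g\in L^2_0$, so $\lambda$ is the product measure and $R\perp T$.

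I expect the genuine difficulty to be the passage replacing the one-line appeal to~(\ref{d2}) in the case $n=1$, namely showing that $g$ lying in $\bigcap_{m\ge m_0}L^2\bigl(\bigvee_{i=1}^w\cb^{(i)}_m\bigr)$ forces $g=0$: the naive identity ``the tail of a join of independent decreasing filtrations equals the join of their tails'' is false in general, and it is salvaged here only because the $w$ branches are genuinely mutually independent, which reduces the statement to the strong convergence of a tensor product of conditional expectations. The supporting combinatorial bookkeeping — the refinement identity $\mathrm{supp}_m(g)=\mathrm{prefix}(\mathrm{supp}_{m+1}(g))$, the monotonicity and stabilisation of $|\mathrm{supp}_m(g)|$, and the extraction of a fixed finite family of branches on which to perform the tensor argument — is the main technical overhead but is routine once the right notion of support is in place.
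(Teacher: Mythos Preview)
Your proof is correct and follows essentially the same route as the paper: both arguments use JP($n$) at each level to obtain a minimal ``support'' set of size at most $n$, observe via $\cb_\omega=\cb_{\omega0}\otimes\cb_{\omega1}$ that these supports are compatible under the prefix map so that their sizes stabilise and determine finitely many infinite branches, and then conclude by noting that the conditional expectation onto the join of independent shrinking factors is the tensor product of the individual conditional expectations, each of which tends to the mean by~(\ref{d2}). The only cosmetic difference is that the paper tracks the minimal support of the whole image $\Phi_\lambda(L^2(\cd))$ (so the branches are independent of $f$), whereas you track the support of a single $g=\Phi_\lambda(f)$; both yield the same conclusion.
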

\begin{proof}
Let $R$ belong to the JP($n$) class, $T$ be ergodically ID and
consider
an ergodic joining $\lambda$ of $R$ and $T$ and the corresponding
Markov operator
$\Phi_{\lambda}:\;L^2\zdr\to L^2\xbm$.

With the notation of previous section, let $\Omega_{k}$ be the set of
all $\omega\in\{0,1\}^{\ast}$
with $\left|\omega\right|=k$, for $k\ge 0$, so that
$\cb=\bigotimes_{\omega\in\Omega_{k}}\mathcal{B}_{\omega}$.
By the JP($n$) property of $R$, we can find subsets $\Delta$ of
$\Omega_{k}$ of cardinality${}\le n$ such that
$\lambda$ is the product of its restriction to $\cd\vee
\bigotimes_{\omega\in\Delta}\cb_{\omega}$ and the
other factors, or equivalently
\beq\label{JPn}
\Phi_{\lambda}(L^2(\cd))\subset
L^2\left(\bigotimes_{\omega\in\Delta}\cb_{\omega}\right).
\eeq
Since, for $\Delta,\, \Delta'\subset \Omega_{k}$, we have by
independence 
\[
 L^2\left(\bigotimes_{\omega\in\Delta}\cb_{\omega}\right)\cap
L^2\left(\bigotimes_{\omega\in\Delta'}\cb_{\omega}\right)=
  L^2\left(\bigotimes_{\omega\in\Delta\cap\Delta'}\cb_{\omega}\right),
\]
it follows that there is a smallest set $\Delta\subset\Omega_{k}$
satisfying (\ref{JPn}), which we denote by
$\Omega_{k}^{\lambda}$. Let also  $n_{k}$ be its cardinality ($n_k\le
n$ for all $k$).

Now, from $\cb_{\omega}=\cb_{\omega0}\otimes\cb_{\omega1}$ for all
$\omega$, it follows that  $\Omega_k^{\lambda}$ is contained in the
set of all $\omega[0,k)$ for $\omega\in\Omega_{k+1}^{\lambda}$, and
conversely $\Omega_{k+1}^{\lambda}$ is contained in the set of all
$\omega0$ and all $\omega1$ for $\omega\in\Omega_{k}^{\lambda}$. In
particular, $n_{k}$ is non decreasing and, being bounded by $n$, it
is eventually constant. This implies that there exists $K$ such that
for $k\ge K$ $n_k=n_K$ and for each $\omega\in\Omega_{k}^{\lambda}$
there is exactly one $\omega'\in
\{\omega0,Ê\omega1\}$ in $\Omega_{k+1}^{\lambda}$. Hence there exist
$\eta_1,\ldots \eta_{n_K}$ in $\{0,1\}^{\N}$ such that
\[
\Phi_{\lambda}(L^2(\cd))\subset \bigcap_{k\ge
K}L^2\left(\bigotimes_{i=1}^{n_K}\cb_{\eta_i[0,k)}\right).
\]

Finally, again by independence, for each $k\ge K$, the orthogonal
projection from
$L^2\left(\bigotimes_{i=1}^{n_K}\cb_{\eta_i[0,K)}\right)$
onto $L^2\left(\bigotimes_{i=1}^{n_K}\cb_{\eta_i[0,k)}\right)$ is the
tensor product of the orthogonal projections
of the factors. By (\ref{d2}) for each $i\in\{1,\ldots, n_K\}$ the
projection of every $f\in L^2_0(\cb_{\eta_i[0,K)})$ in
$L^2(\cb_{\eta_i[0,k)})$
goes to zero as $k$ tends to infinity and it follows that the same
holds for the tensor product. Therefore 
$\Phi_{\lambda}$ is the orthogonal projection onto the constant functions
and $\lambda$ is the product measure.
\end{proof}

\subsection{Examples}

We present here a family of systems which are JP($2$) but not
JP($1$). Recall that a direct product of
two weakly mixing automorphisms cannot have the JP property, so we
get such examples from the Proposition below
if we assume moreover the automorphisms to be weakly mixing.
\begin{Prop}\label{jp2}
If $R_1$ has the JP property and $R_2$ is distally simple then
$R_1\times R_2$ is
JP($2$).
\end{Prop}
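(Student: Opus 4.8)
The plan is to use the (obvious $n=2$ analogue of the) characterization (\ref{f4})--(\ref{jlambda}): it suffices to show that for every $k\ge 3$, every product $S_1\times\cdots\times S_k$ of weakly mixing systems and every $\la\in J^e(R_1\times R_2,S_1\times\cdots\times S_k)$, there is a pair $i_1,i_2$ with $\Phi_\la\bigl(L^2_0(\cd)\bigr)\subset L^2(\cc_{i_1}\vee\cc_{i_2})$. Writing $\cd_1,\cd_2$ for the two coordinate factors of $R_1\times R_2$, so that $\cd=\cd_1\vee\cd_2$, I would split
$$
L^2_0(\cd)=L^2_0(\cd_1)\oplus L^2_0(\cd_2)\oplus\bigl(L^2_0(\cd_1)\ot L^2_0(\cd_2)\bigr)
$$
and treat the three summands. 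Since $R_2$ is distally simple it is JP by Proposition~\ref{mar}. Restricting $\Phi_\la$ to $L^2_0(\cd_1)$ gives the Markov operator of the ergodic projection of $\la$ onto $J^e(R_1,S_1\times\cdots\times S_k)$, so the JP property of $R_1$ forces this restriction to land in a single $L^2_0(\cc_a)$; likewise the restriction to $L^2_0(\cd_2)$ lands in a single $L^2_0(\cc_b)$ by the JP property of $R_2$. Thus the two linear summands are each confined to one coordinate, and the whole difficulty is to control the mixed term and to check that all three pieces can be covered by a \emph{single} pair $\{i_1,i_2\}$.

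For the main step I would regard $\la$ as an ergodic joining $\la\in J^e\bigl(R_1,B\bigr)$ of $R_1$ with $B:=(R_2\times S_1\times\cdots\times S_k,\bar\la)$, where $\bar\la$ is the projection of $\la$ onto the $Z_2,Y_1,\dots,Y_k$ coordinates. Applying (\ref{jule}) to $R_2$ and the weakly mixing system $W:=S_1\times\cdots\times S_k$, either $R_2$ is disjoint from $W$ under $\bar\la$, or the extension $(R_2\times W,\bar\la)\to W$ is relatively distal; in the latter case $B$ is a relatively distal extension of the weakly mixing base $W$. I would then use the JP property of $R_1$ on the projection onto $J^e(R_1,W)$ to obtain the index $j_1$ carrying the $R_1$--coupling, and \emph{lift} this decomposition through the relatively distal extension $B\to W$ by means of (\ref{furst}): after splitting off the weakly mixing coordinates not involved, the distal fibre (which carries $R_2$) can contribute at most one further coordinate $j_2$, yielding $\Phi_\la\bigl(L^2_0(\cd)\bigr)\subset L^2(\cc_{j_1}\vee\cc_{j_2})$. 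The degenerate alternatives (where $R_2$ is disjoint from $W$, or where $R_2$ has purely discrete spectrum) would be handled separately, using the PID property of $R_2$ together with (\ref{furst}).

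The hard part will be the mixed term $L^2_0(\cd_1)\ot L^2_0(\cd_2)$, i.e.\ the behaviour of $\Phi_\la(f_1\ot f_2)$. Indeed, the coordinate that carries the coupling of $R_2$, and that of the mixed term, need \emph{not} be visible in the marginal projections of $\la$: a coupling can be pairwise independent with each single $S_i$ and yet be carried jointly by one of them, so the indices $a,b$ read off from the linear summands may be degenerate (even zero) while $\Phi_\la(f_1\ot f_2)$ lands in a different coordinate. Consequently the second index and the confinement of $\Phi_\la(f_1\ot f_2)$ must be extracted from the relatively distal lift via (\ref{furst}) rather than from the separate JP decompositions of the two marginals. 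Making this application of (\ref{furst}) legitimate --- that is, producing a relatively distal factor of the appropriate sub-system on which $\la$ restricts to a direct product over the remaining weakly mixing coordinates, and verifying that the pair $\{j_1,j_2\}$ simultaneously absorbs the two linear terms --- is the delicate point on which the whole argument turns.
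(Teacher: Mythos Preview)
Your plan gathers the right ingredients---JP of each factor, property~(\ref{jule}), Furstenberg's lifting~(\ref{furst}), and the PID of $R_2$---but the way you deploy~(\ref{jule}) creates a gap you never close. You apply~(\ref{jule}) to the pair $(R_2,W)$ with $W=S_1\times\cdots\times S_k$, obtaining that $B=(R_2\times W,\bar\la)\to W$ is relatively distal. To invoke~(\ref{furst}) you then need a weakly mixing system that is \emph{independent from the base $W$} in the projected joining; but the independence you actually have from the JP property of $R_1$ is between $\cd_1\vee\cc_{j_1}$ and $\bigvee_{j\ne j_1}\cc_j$, and $R_1$ is neither assumed weakly mixing nor independent from all of $W$. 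So~(\ref{furst}) does not apply to the extension $B\to W$, and your ``lift through the distal extension'' step does not go through. Your third paragraph essentially concedes this without offering a fix.

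The paper's proof avoids the difficulty by choosing the \emph{other} partner for $R_2$ in~(\ref{jule}). Working with $k=3$ (which suffices), once JP of $R_1$ gives $\cd_1\vee\cc_1\perp\cc_2\vee\cc_3$ and JP of $R_2$ gives (after relabelling) $\cd_2\perp\cc_3$, one applies~(\ref{jule}) to $R_2$ and the ergodic system on $\cd_1\vee\cc_1\vee\cc_2$---that is, a system that already contains $\cd_1$ and the two chosen $S$-coordinates. If the resulting extension $\cd_1\vee\cd_2\vee\cc_1\vee\cc_2\to\cd_1\vee\cc_1\vee\cc_2$ is relatively distal, then since the base is independent from $\cc_3$ and $S_3$ is weakly mixing,~(\ref{furst}) applies directly and yields $\cd_1\vee\cd_2\vee\cc_1\vee\cc_2\perp\cc_3$. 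If instead $\cd_2\perp\cd_1\vee\cc_1\vee\cc_2$, then $\cd_2$, $\cd_1\vee\cc_1\vee\cc_2$ and $\cc_3$ are pairwise independent, and Lemma~\ref{lryz} with the PID of $R_2$ gives global independence. Either way the conclusion follows in a few lines, with no need to split $L^2_0(\cd)$ into tensor summands or to track the mixed term separately.
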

\begin{proof}

Let $S_{1}$, $S_{2}$ and $S_{3}$ be weakly mixing automorphisms and
consider an
ergodic joining $\lambda$ between $R_1\times R_2$ and $S_{1}\times
S_{2}\times S_{3}$.
In this joining we denote $\cd_i$ and $\cc_{j}$
the factor $\sigma$-algebra corresponding to each transformation $R_i$
and $S_{j}$.

By the JP property of $R_1$ we can assume, up to exchange of
coordinates,
that $\cd_1\vee \cc_1$ is independent from $\cc_2\vee\cc_3$, i.e.\
that $\cd_1\vee \cc_1$,
$\cc_2$ and $\cc_3$ are independent. Since $R_2$ also has the JP
property, $\cd_2$ is
independent from either $\cc_2$ or $\cc_3$ and we may assume
$\cd_2\perp\cc_3$.

Now, $\cd_1\vee\cc_1\vee\cc_2$ is independent from $\cc_3$ and, since
$R_2$ is distally simple,
by (\ref{jule}), either $\cd_1\vee\cd_2\vee\cc_1\vee\cc_2$ is
relatively distal
over $\cd_1\vee\cc_1\vee\cc_2$, or $\cd_2$ is independent from
$\cd_1\vee\cc_1\vee\cc_2$.
In the first case, since $S_3$ is weakly mixing, we get by
(\ref{furst}) that
$\cd_1\vee\cd_2\vee\cc_1\vee\cc_2$ remains independent from $\cc_3$.
In the latter case,
$\cd_1\vee\cc_1\vee\cc_2$, $\cd_2$ and $\cc_3$ are pairwise
independent, whence independent by the PID
property of $R_2$ and Lemma~\ref{lryz}, and we get again
$\cd_1\vee\cd_2\vee\cc_1\vee\cc_2\perp\cc_3$.

\end{proof}

\section{Flows with the convolution singularity property}\label{flows}
As noticed in the introduction,  all definitions and results of this
paper extend to the case of flows. 
In this section we will deal with flows since special flows over
irrational rotations can often be shown to satisfy the assumptions of
Proposition~\ref{francois} below.
Besides, it is not hard to see that a flow $(T_t)_{t\in\R}$ has the CS
property if and only if any given non-zero time $t$ automorphism of it
has the CS property.

Assume that $(T_{t})_{t\in\R}$ is a
(measurable) measure-preserving flow of $\xbm$ and let $\sigma$ be
its reduced maximal spectral type. We also denote by $(T_{t})$ the
associated unitary one-parameter group on $L^{2}\xbm$. Then, given
$t\in \R$, $T_{t}$ on
$L^2_0$ corresponds spectrally to the multiplication by $s\mapsto
e^{2\pi its}$ on $L^2(\R,\sigma)$ and the operator $V$ on
$L^{2}\xbm$, which is a weak limit of a sequence
$\left(T_{t_{n}}\right)_{n\in\N}$ corresponds to the
multiplication by a function $\phi_V$ being  a weak-$*$ limit of
the sequence $(e^{2\pi it_{n}\cdot})$ in $L^{\infty}(\R,\sigma)$.

\begin{Prop}\label{francois}
Assume that there exists a sequence
$\left(T_{t_{n}}\right)_{n\in\N}$ converging weakly to $V$ which
is not of the form $cT_{s}$ and such that $\phi_{V}$ is analytic.
Then $\sigma$ is singular with respect to the convolution of any two
continuous
measures.
\end{Prop}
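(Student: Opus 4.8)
The plan is to prove the contrapositive: assuming that $\sigma$ is \emph{not} singular with respect to some convolution $\rho=\lambda_{1}\ast\lambda_{2}$ of two continuous measures, I will deduce that $\phi_{V}$ is a constant multiple of a character, i.e.\ $V=cT_{s}$, contradicting the hypothesis. Write $\chi_{n}(s)=e^{2\pi i t_{n}s}$, so that $\phi_{V}$ is the weak-$*$ limit of $\chi_{n}$ in $L^{\infty}(\sigma)$. Let $\nu$ be the (nonzero) part of $\sigma$ absolutely continuous with respect to $\rho$; then $\phi_{V}$ is also the weak-$*$ limit of $\chi_{n}$ in $L^{\infty}(\nu)$, and it agrees $\nu$-a.e.\ with the weak-$*$ limit $\phi$ of $\chi_{n}$ in $L^{\infty}(\rho)$ (pass to a subsequence so these limits exist).

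The first key step exploits the convolution structure. Since $\rho$ is the image of $\lambda_{1}\otimes\lambda_{2}$ under addition, $f\mapsto f(x+y)$ is an isometric embedding $L^{2}(\rho)\hookrightarrow L^{2}(\lambda_{1}\otimes\lambda_{2})$ intertwining multiplication by $\chi_{n}$ with multiplication by $\chi_{n}(x)\chi_{n}(y)$. Passing to a further subsequence so that $\chi_{n}\to\psi_{i}$ weak-$*$ in $L^{\infty}(\lambda_{i})$, the products $\chi_{n}(x)\chi_{n}(y)$ converge weak-$*$ to $\psi_{1}(x)\psi_{2}(y)$ (the test integrals factorise over $\lambda_{1}\otimes\lambda_{2}$). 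Taking weak operator limits on both sides of the intertwining relation and evaluating at the constant function yields the functional equation $\phi(x+y)=\psi_{1}(x)\psi_{2}(y)$ a.e.; on the overlap carrying $\nu$ this reads $\phi_{V}(x+y)=\psi_{1}(x)\psi_{2}(y)$.

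The second step turns analyticity into an exponential form. From the functional equation, for two support points $y_{0},y_{1}$ of $\lambda_{2}$ with $\psi_{2}\neq0$ and overlapping $x$-slices, the ratio $\phi_{V}(x+y_{1})/\phi_{V}(x+y_{0})$ is constant on a set of positive $\lambda_{1}$-measure; since $\phi_{V}$ is analytic, this forces $\phi_{V}(\,\cdot+h)=c(h)\,\phi_{V}$ identically, with $h=y_{1}-y_{0}$. Because $\lambda_{2}$ is continuous, such differences $h$ are not confined to any discrete coset and so generate a dense subgroup of $\mathbb{R}$. An analytic $\phi_{V}$ quasi-invariant under a dense group of translations can have no zeros (a zero would be reproduced on a dense set), so $\phi_{V}'/\phi_{V}$ is entire and periodic with a dense group of periods, hence constant; therefore $\phi_{V}(s)=a\,e^{\kappa s}$ for some $a\neq0,\ \kappa\in\mathbb{C}$ (and likewise $\phi=a\,e^{\kappa s}$ $\rho$-a.e.).

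It remains to show $\kappa$ is purely imaginary, for then $\phi_{V}(s)=a\,e^{2\pi i\tau s}$ gives $V=aT_{\tau}$. This modulus rigidity is the hard part: the bound $|\phi_{V}|\le1$ holds only on $\mathrm{supp}\,\sigma$, so boundedness of the exponential does not by itself force $\Re\kappa=0$. The plan is to recover unimodularity from the difference structure: on $L^{\infty}(\rho\otimes\rho)$ the functions $e^{2\pi i t_{n}(s-s')}=\chi_{n}(s)\overline{\chi_{n}(s')}$ converge weak-$*$ to $\phi(s)\overline{\phi(s')}=|a|^{2}e^{\kappa s+\bar\kappa s'}$, and, being weak-$*$ limits of functions of $s-s'$ (a weak-$*$ closed subspace), the limit is again a function of $s-s'$ a.e. Fixing good base points $s_{0}',s_{1}'$ shows that this limit agrees with $C_{i}\,e^{\kappa(s-s')}$ on $\mathrm{supp}\,\rho-s_{i}'$; the decisive point is that a convolution has a rich support, so these two sets overlap in a set with an accumulation point — e.g.\ for $\delta$ a difference of two support points of $\lambda_{2}$, $\mathrm{supp}\,\rho\cap(\mathrm{supp}\,\rho+\delta)$ contains a whole translate of $\mathrm{supp}\,\lambda_{1}$. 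On the overlap the two analytic expressions coincide, forcing $C_{0}=C_{1}$, i.e.\ $e^{2\Re\kappa\,s_{0}'}=e^{2\Re\kappa\,s_{1}'}$, whence $\Re\kappa=0$. Thus $V=cT_{\tau}$, contradicting the hypothesis. I expect the main obstacle to be exactly this last step — upgrading the a.e.\ functional identities to genuine modulus rigidity — which is where the continuity (non-atomicity) of the measures and the additive richness of a convolution's support are both indispensable.
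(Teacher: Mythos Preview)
Your first three steps are essentially the paper's own argument: the contrapositive setup, the ``generalized character'' identity $\phi(x+y)=\psi_1(x)\psi_2(y)$ for $\lambda_1\otimes\lambda_2$-a.e.\ $(x,y)$, and the use of analyticity of $\phi_V$ to pass from an identity on a set of positive measure to an identity on all of $\R$. Your route at the third step (quasi-invariance of $\phi_V$ under a dense group of translations $\Rightarrow$ no zeros $\Rightarrow$ $\phi_V'/\phi_V$ constant) is a pleasant variant of the paper's, which instead derives the full Cauchy equation $\phi_V(s_1+s_2)=\phi_V(s_1)\phi_V(s_2)/\phi_V(0)$ directly; both yield $\phi_V(s)=a\,e^{\kappa s}$ with $a\neq0$.

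Your fourth step --- forcing $\Re\kappa=0$ --- is not in the paper. The paper simply ends with ``$\phi_V/\phi_V(0)$ is a continuous character of $\R$'' after obtaining the multiplicative functional equation, and does not spell out why the homomorphism lands in $\T$ rather than $\C^\times$. So your instinct that something remains to be said is reasonable.

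However, your argument for this last step has a genuine gap. The crucial input is the parenthetical ``(and likewise $\phi=a\,e^{\kappa s}$ $\rho$-a.e.)'', which you then feed into the difference argument on $L^\infty(\rho\otimes\rho)$. But the exponential form was obtained for $\phi_V$ \emph{via analyticity}; for $\phi$ you only have the a.e.\ functional equation $\phi(x+y)=\psi_1(x)\psi_2(y)$ and the equality $\phi=\phi_V$ $\sigma$-a.e., not $\rho$-a.e. Nothing you have written lets you propagate the exponential form from the $\sigma$-part to all of $\rho$. If instead you try to run your difference argument on $\sigma\otimes\sigma$ (where $\phi_V=a\,e^{\kappa\cdot}$ genuinely holds), the weak-$*$ closedness step still goes through and you do get that $|a|^2e^{\kappa s+\bar\kappa s'}$ is a.e.\ a function of $s-s'$; but now your ``rich support'' overlap claim fails, since for a general continuous $\sigma$ one cannot expect $\sigma$ and a translate of $\sigma$ to be non--mutually singular.

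A much shorter way to close the gap, which bypasses all of this, is to use that the flow acts on a real $L^2$-space: complex conjugation $J$ commutes with every $T_t$, hence with $V$, and in the spectral picture this translates into $\phi_V(-s)=\overline{\phi_V(s)}$ $\sigma$-a.e.\ (the maximal spectral type of such a flow is symmetric). Since $\sigma$ is continuous and both sides are real-analytic, the identity holds for all $s$; plugging in $\phi_V(s)=a\,e^{\kappa s}$ gives $a\in\R$ and $-\kappa=\bar\kappa$, i.e.\ $\Re\kappa=0$.
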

\begin{proof} Let us notice first that the result is purely
spectral: {\em If $\sigma\not\perp\la_1\ast\la_2$, where $\la_i$
are positive continuous Borel measures on $\R$, and if $e^{2\pi i
t_n\cdot}\to\phi$ weakly-$\ast$ in $L^\infty(\R,\sigma)$, where
$\phi$ is analytic, then $\phi$ must be a multiple of a
character.}

Since $e^{2\pi it_n\cdot}\to\phi_V$ in $L^\infty(\R,\sigma)$
implies the same weak-$\ast$ convergence in $L^\infty(\R,\eta)$
for each $\eta\ll\sigma$, w.l.o.g. we can assume that
$0<\sigma\ll\lambda_{1}\ast\lambda_{2}$ and moreover that
$\phi_{V}$ is not identically zero. By passing to a subsequence of
$(t_n)$ if necessary and thanks to the weak-$\ast$ compactness,
$e^{2\pi it_{n}\cdot}$ converges weakly-$\ast$ to $\psi_{1}$,
$\psi_{2}$ and $\psi$ in $L^{\infty}(\R,\lambda_{1})$,
$L^{\infty}(\R,\lambda_{2})$ and
$L^{\infty}(\R,\lambda_{1}\ast\lambda_{2})$ respectively, where
$\psi_i$ and $\psi$ are Borel functions which we assume to be
defined everywhere. We thus obtain the {}``generalized character''
property \beq\label{francois1}
\psi\left(s_{1}+s_{2}\right)=\psi_{1}\left(s_{1}\right)\psi_{2}\left(s_{2}\right)\eeq
for $\lambda_{1}\otimes\lambda_{2}$-almost all
$\left(s_{1},s_{2}\right)\in\R\times\R$. This can be found in
\cite{Ho-Me-Pa}, but we give a proof for completeness. For $f_i\in
L^\infty(\R,\la_i)$ we put $d\la_i'=f_i\,d\la_i$; we have $$
\int_{\R}e^{2\pi it_ns}\,d\la'_1\ast\la'_2(s)=
\int_{\R\times\R}e^{2\pi i
t_n(s_1+s_2)}f_1(s_1)f_2(s_2)\,d\la_1(s_1)d\la_2(s_2) $$ $$ =
\int_{\R}e^{2\pi it_ns_1}f_1(s_1)\,d\la_1(s_1)\int_{\R}e^{2\pi
it_ns_2}f_2(s_2)\,d\la_2(s_2).$$ Thus, by passing to the weak-$*$
limits
$$
\int_{\R}\psi(s)\,d\la_1'\ast\la_2'(s)=\int_{\R}\psi_1(s_1)\,d\la_1'(s_1)
\int_{\R}\psi_2(s_2)\,d\la_2'(s_2).$$  It follows that
$$\int_{\R\times\R}\psi(s_1+s_2)\,d\la_1'(s_1)d\la_2'(s_2)=
\int_{\R}\psi_1(s_1)\,d\la_1'(s_1)
\int_{\R}\psi_2(s_2)\,d\la_2'(s_2),$$ whence (\ref{francois1})
holds since functions of the form $f_1\otimes f_2$ generate a linearly
dense subset of $L^1(\R\times\R,\la_1\otimes\la_2)$.

Of course, we also have $\psi=\phi_{V}$ $\sigma$-almost
everywhere. Put $h=\frac{d\sigma}{d\lambda_{1}\ast\lambda_{2}}$
and define the measure $\tilde{\sigma}$ on $\R\times\R$ by
$d\tilde{\sigma}=\tilde{h}\,d\la_1\otimes\la_2$, where
$\tilde{h}\left(s_{1},s_{2}\right)=h\left(s_{1}+s_{2}\right)$.
Then by~(\ref{francois1}) and the definition of $\tilde{\sigma}$
\beq\label{francois2} \phi_{V}\left(s_{1}+s_{2}\right)=
\psi_{1}\left(s_{1}\right)\psi_{2}\left(s_{2}\right)\eeq for
$\tilde{\sigma}$-almost all $\left(s_{1},s_{2}\right)\in\R^2$.

Put
$C=\{(s_1,s_2)\in\R^2:\:\phi_V(s_1+s_2)=\psi_1(s_1)\psi_2(s_2)\}$.
Then $C$ is a Borel subset of $\R^2$. Since $\tilde{h}>0$ on a set
of positive $\la_1\ast\la_2$-measure, $\la_1\otimes\la_2(C)>0$.
There exist Borel subsets $A$ and $B$ of $\R$ such that
\begin{equation}\label{francois3}
\lambda_{1}\otimes\lambda_{2}\left(\left(A\times B\right)\cap
C\right)\geq\frac{3}{4}\lambda_{1}\left(A\right)\lambda_{2}\left(B\right)>0.
\end{equation}
Given $s\in B$ define
$$
A_{s}=\left\{ s_{1}\in A:\left(s_{1},s\right)\in C\right\}$$ and
let
$$
B'=\left\{ s\in
B:\lambda_{1}\left(A_{s}\right)>\frac{1}{2}\lambda_{1}\left(A\right)\right\}.
$$
In view of (\ref{francois3}), $\lambda_{2}\left(B'\right)>0$. If
we fix $s_{2}\in B^{\prime}$, the equality
$\phi_{V}\left(s_{1}+s_{2}\right)=\psi_{1}\left(s_{1}\right)\psi_{2}\left(s_{2}\right)$
is satisfied for all $s_{1}\in A_{s_{2}}$. But $\la_1(A_{s_2})>0$,
so $A_{s_2}$ is uncountable and since $\phi_{V}$ is analytic and
not identically zero,  we must have
$\psi_{2}\left(s_{2}\right)\neq0$. We have shown that
$\psi_2(s_2)\neq0$ for $s_2\in B'$.

Now fix $s_{2}=s^{\prime}-s^{\prime\prime}$ with $s^{\prime}$ and
$s^{\prime\prime}$ in $B^{\prime}$. For $s_{1}\in
A_{s^{\prime}}\cap A_{s^{\prime\prime}}$, we obtain
\beq\label{francois4} \frac{\phi_{V}\left(s_{1}+s^{\prime}\right)}
{\psi_{2}\left(s^{\prime}\right)}=\frac{\phi_{V}
\left(s_{1}+s^{\prime\prime}\right)}{\psi_{2}\left(s^{\prime\prime}\right)}.\eeq
But $\lambda_{1}\left(A_{s^{\prime}}\cap
A_{s^{\prime\prime}}\right)>0$ from the definition of
$B^{\prime}$, and in particular $A_{s'}\cap A_{s^{\prime\prime}}$
is uncountable. Both functions in~(\ref{francois4}) being analytic
functions of $s_{1}$, the equality holds for all $s_1\in\R$.
Replacing now $s_1$ by  $s_{1}-s^{\prime\prime}$, we obtain that
\begin{equation}\label{francois5}
\phi_{V}\left(s_{1}+s_{2}\right)=\phi_{V}\left(s_{1}\right)
\frac{\psi_{2}\left(s^{\prime}\right)}
{\psi_{2}\left(s^{\prime\prime}\right)}\end{equation} for every
$s_{1}\in\R$, and in particular by taking $s_1=0$,
$\phi_{V}\left(s_{2}\right)= \phi_{V}\left(0\right)
\frac{\psi_{2}\left(s^{\prime}\right)}{\psi_{2}\left(s^{\prime\prime}\right)}$.
But the set $B^{\prime}-B^{\prime}$ is still uncountable, so we
must have $\phi_{V}\left(0\right)\neq0$. It now follows
from~(\ref{francois5}) that
$$
\phi_{V}\left(s_{1}+s_{2}\right)= \frac{\phi_{V}\left(s_{1}\right)
\phi_{V}\left(s_{2}\right)}{\phi_{V}\left(0\right)}$$ which holds
for $s_2\in B'-B'$ and all $s_1\in\R$. Once $s_1$ is fixed the
number of $s_2$ being uncountable, we hence obtain that this
equality holds for all $(s_1,s_2)\in\R^2$.  We conclude that
$\frac{\phi_{V}}{\phi_{V}\left(0\right)}$ is a continuous
character of $\R$ and the proof is complete.
\end{proof}

In order to obtain natural examples of flows satisfying the
assumptions of Proposition~\ref{francois} let us first notice that
if $P$ is a probability Borel measure on $\R$ and we have the weak
convergence \beq\label{przyklad} T_{t_n}\to\int_{\R}T_t\,dP(t)\eeq
of Markov operators in $L^2\xbm$ then
$$
e^{2\pi it_n\cdot}\to \hat{P}(\cdot)\;\;\mbox{weakly-$\ast$ in
$L^\infty(\R,\sigma)$}$$ (see e.g.\ the proof of Cor. 5.2 in
\cite{Fr-Le2}). Now, for example if $P$ has a compact support and is
not a
Dirac measure, or if $P$ is a Gaussian distribution,
then $\hat{P}$ is an analytic function different
from a multiple of a character.

This is satisfied for many
examples of special flows over irrational rotations: see  e.g.\
\cite{Fa-Wi},
\cite{Fr-Le1}, \cite{Fr-Le2}, \cite{Ka}, \cite{Le-Pa}, \cite{Le-Wy},
\cite{Vo}.

\vspace{2ex}
Added in October 2009: It is proved in \cite{Ka-Le} that there are
CS systems which are not DS. In fact, in \cite{Ka-Le} it is proved
that typical automorphism $T$ has a non-trivial relatively weakly
mixing extension $\widehat{T}$ such that $\widehat{T}$ has CS
property.

 \scriptsize

\end{document}